\documentclass[12pt]{amsart}

\usepackage{bm,latexsym,amsfonts,amsmath,amssymb,amsthm,url,graphics,psfrag,amscd,stmaryrd, mathrsfs}
\usepackage[english]{babel}
\usepackage[T1]{fontenc}

\usepackage{graphics}
\usepackage{graphicx}
\usepackage{color}

\paperheight=29.7cm
 \paperwidth=21cm
 \setlength\textwidth{16cm}
\hoffset=-1in
  \setlength\marginparsep{0.5cm}
 \setlength\marginparwidth{0.5cm}
  \setlength\marginparpush{0.5cm}
  \setlength\evensidemargin{2.5cm}
 \setlength\oddsidemargin{2.5cm}
  \setlength\topmargin{2.4cm}
  \setlength\headheight{0.5cm}
  \setlength\headsep{0.5cm}
  \voffset=-1in
\setlength\textheight{23.5cm}

\newcommand{\R}{\mathbb{R}}

\newcommand{\N}{\mathbb{N}}
\newcommand{\E}{\mathbb{E}}

\newcommand{\pp}{\mathbb{P}}

\newcommand{\kC}{\mathcal{C}}

\newcommand{\kO}{O}

\newcommand{\kS}{\mathcal{S}}

\newcommand{\lin}{\left[\kern-0.15em\left[}
\newcommand{\rin} {\right]\kern-0.15em\right]}
\newcommand{\linf}{[\kern-0.15em [}
\newcommand{\rinf} {]\kern-0.15em ]}
\newcommand{\ilin}{\left]\kern-0.15em\left]}
\newcommand{\irin} {\right[\kern-0.15em\right[}

\newtheorem{lem}{Lemma}[section]

\newtheorem{theo}[lem]{Theorem}

\title[ Persistence probability of random Weyl polynomial]
       {\bf Persistence probability of random Weyl polynomial }

\author{Van Hao Can}
\author{Viet-Hung Pham}
\address{Institute of Mathematics, Vietnam Academy of Science and Technology, 18 Hoang Quoc Viet Street, 10307 Hanoi, Vietnam}
\email{cvhao89@gmail.com}
 \email{pgviethung@gmail.com}

 \keywords{Random Weyl polynomials, Gaussian processes, real zeros, persistence probability} 
\subjclass[2010]{Primary 60G15, 26C10; secondary 26A12.}

\begin{document}
\maketitle
\begin{abstract}
In this paper, using the method proposed by Dembo and Mukherjee \cite{DM}, we obtain the persistence exponents of random Weyl polynomials in  both cases: half nonnegative axis and the whole real axis. Our result is a confirmation to the predictions of Schehr and Majumdar \cite{SM2}. 
\end{abstract}

\section{Introduction}
Let  $\{a_i\}_{i=0}^{\infty}$ denote a sequence of i.i.d. random variables of zero mean and unit variance. Consider  random algebraic polynomials
$$Q_n(x)=c_0a_0+c_1a_1x+\ldots+c_n a_nx^n,$$
where $c_i$'s are fixed constants depending on the models. 

Study the number of real roots of random polynomials is a topic of much interest, see two standard monographs \cite{BS,Far}. A natural question is to calculate the moments and to consider limit theorems. When the common law of $a_i$'s is nondegenerate, Kac \cite{Kac} provides the celebrated Kac formula to calculate the moments of the number of real roots expressed in integral forms. Especially, for Gaussian case where $a_i \overset{i.i.d}{\sim}\mathcal{N}(0,1)$, Edelman and Kostlan \cite{EK} give the geometric meaning of the expectation. For Kac polynomials ($c_0=\ldots=c_n=1$), Ibragimov and Maslova \cite{IM} prove the universality of expected number of real roots, and then Maslova  \cite{Mas1,Mas2} provides the universality of the variance and also a universal central limit theorem. For recent results of universality  and central limit theorems for other models, we refer to \cite{Dal, DNV, DV,NNV,TV}.  

In this paper, we consider another interesting object that is the persistence probability
$$\pp \left( Q_n(x) >0 , \, \forall x\in J_n\right),$$
for some fixed intervals $J_n$. Such probabilities are extremely important with applications in reliability theory and  statistical physics, see \cite{AS, SM1}. They are well-studied for stationary Gaussian processes with nonnegative covariance function, but for general structure of covariance, it remains a big challenge, see \cite{FeFe}. In connection with studies of the number of real roots, we observe that if $a_i$'s obey a symmetric law,
$$\pp \left( Q_n(x) \, \mbox{has no real roots} \right) = 2\pp \left( Q_n(x) >0 , \, \forall x\in \R\right).$$

For Kac polynomials ($c_0=\ldots=c_n=1$), a very early result by Littewood and Offord \cite{LO1,LO2} shows that if $n$ is  even, $\pp (N_{Kac,n}=0)=O(1/\log n)$. Notice that if $n$ is odd, the persistence probability is zero.  Later on, Dembo et al \cite{DPSZ} provide a precise asymptotic formula that
\begin{equation}\label{kac}
\pp (Q_{Kac,n}(x) >0 , \, \forall x\in \R)=n^{-4b_0+o(1)},
\end{equation}
where $b_0=-\lim_{t\rightarrow\infty}t^{-1}\log \pp(Y_s>0, \, \forall s\in [0,t])$, with $Y$ the centered stationary Gaussian process with correlation $\E (Y_0Y_t)=1/\cosh(t/2)$.  They first prove \eqref{kac} for Gaussian case by approximating the random polynomials in small intervals to the Gaussian process $Y$ above. And then,  thanks to strong Komlos-Major-Tusnady approximation, they can generalize (\ref{kac}) to any  distribution   having finite moments of all orders. See also \cite{LS} for some estimates for the constant $b_0$ above.

In Gaussian case $a_i \overset{i.i.d}{\sim}\mathcal{N}(0,1)$,   Schehr and Majumdar propose in \cite{SM2} a mean-field approximation to revisit the persistence probability of Kac polynomials and also \textit{predict} an asymptotic formula for elliptic and Weyl models.
\begin{itemize}
\item[-] For elliptic polynomials, i.e.  $c_i=\sqrt{\binom{n}{i}}$ for $i=1, \ldots, n$, 
\begin{equation}\label{ellip}
\lim_{n \rightarrow \infty} \frac{\log \pp \left( Q_{Elliptic,n}(x) >0 , \, \forall x\in \R \right)}{\sqrt{n}}=-2\pi b,
\end{equation}
where b is a positive constant  defined as
\begin{equation}\label{exp}
b= - \lim_{T \rightarrow \infty} \frac{\log \pp (\inf_{0 \leq t \leq T} Z(t) >0)}{T} ,
\end{equation}
with $Z(t)$ a centered stationary Gaussian process with correlation $\E (Z_0Z_t)=e^{-t^2/2}$.

\item[-] For Weyl polynomials, i.e.  $c_i=\sqrt{1/i!}$ for $i=1, \ldots, n$,
\begin{equation}\label{weyl}
\lim_{n \rightarrow \infty} \frac{\log \pp \left( Q_{Weyl,n}(x) >0 , \, \forall x\in \R \right)}{\sqrt{n}}=-2b,
\end{equation}
with the same constant $b$ as above.
\end{itemize}
To confirm these predictions,  Dembo and Mukherjee  propose  in \cite{DM} a powerful method for a general setting as follows. Given a sequence of Gaussian processes converging weakly to a stationary Gaussian process, they provide some conditions on the autocorrelation functions of these Gaussian processes to ensure the continuity of persistence exponents (defined as (\ref{exp})). Using this method, they prove the prediction \eqref{ellip}.   They also consider the case that the sequence $\{c_i\}$ satisfies the regularly varying condition, see \cite[Theorem 1.3]{DM}). In this case, their  result has the same form as (\ref{kac}). 

 For Weyl polynomials, by  this method again, they obtain the persistence exponent considered in the intervals $J_n=[0,\sqrt{n}-\alpha_n]$ with $n^{-1/2}\alpha_n \rightarrow 0$, see Lemma \ref{lprp} below. They find that the  persistence exponent for $J_n$ is $-b$, which is a half of the exponent in  \eqref{weyl}.  Thus  they believe   that the intervals $J_n$ contribute the main term to persistence probabilities on half nonnegative axis and they also leave an open suggestion. 

In this paper, we would like to complete the picture for persistence probability of Weyl polynomials in both cases: half nonnegative axis and the whole real axis. More precisely, we confirm the predictions of Schehr and Majumdar \cite{SM2}, and also of Dembo and Mukherjee \cite{DM}. Here is our main result.
\begin{theo}\label{mth}
Consider the Weyl random polynomial defined by 
$$f_n(x)=\sum_{i=0}^{n}\frac{a_i}{\sqrt{i!}}x^i,$$
where the coefficients $a_i$'s are i.i.d. standard normal random variables. We have
\begin{itemize}
\item[(a)] As $n$ tends to infinity,
\begin{equation*}
\lim_{n \rightarrow \infty} \frac{\log \pp \left( f_n(x) >0 , \, \forall x\in [0,\infty)\right)}{\sqrt{n}}=-b,
\end{equation*}
where the positive constant $b$ is defined as in (\ref{exp}).

\item[(b)] Given that $n$ is even, then
\begin{equation*}
\lim_{n \rightarrow \infty} \frac{\log \pp \left( f_n(x) >0 , \, \forall x\in \R\right)}{\sqrt{n}}=-2b,
\end{equation*}
with the same constant $b$ as above.
\end{itemize}
\end{theo}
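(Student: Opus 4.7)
My plan for part~(a) is to combine Lemma~\ref{lprp} with Pitt's Gaussian FKG inequality. The upper bound is immediate since $\{f_n>0 \text{ on } [0,\infty)\}\subset\{f_n>0 \text{ on } [0,\sqrt n-\alpha_n]\}$, the latter having probability $e^{-b\sqrt n(1+o(1))}$. For the lower bound, note that $\E[f_n(x)f_n(y)]=\sum_{i=0}^n (xy)^i/i!\geq 0$ on $[0,\infty)^2$, so Pitt's correlation inequality gives
\[
\pp\!\left(f_n>0 \text{ on } [0,\infty)\right)\ \geq\ \pp\!\left(f_n>0 \text{ on } [0,\sqrt n-\alpha_n]\right)\cdot\pp\!\left(f_n>0 \text{ on } [\sqrt n-\alpha_n,\infty)\right),
\]
reducing the matter to proving $\pp(f_n>0 \text{ on } [\sqrt n-\alpha_n,\infty))\geq e^{-o(\sqrt n)}$. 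I would attack this tail estimate by splitting at $C\sqrt n$ for a large constant $C$ (and using Pitt once more): on the far tail $[C\sqrt n,\infty)$, factor $f_n(x)=(x^n/\sqrt{n!})(a_n+R_n(x))$ and observe that $\operatorname{Var}R_n(x)\leq\sum_{k\ge 1}C^{-2k}$, so conditioning on favourable values of $a_n$ and the other top coefficients yields $f_n>0$ there with a probability bounded below uniformly in $n$; on the middle window $[\sqrt n-\alpha_n,C\sqrt n]$, rescale $x=\sqrt n(1+u)$ and work with the normalised process whose covariance converges to $\sqrt{((1+u)^2-1)((1+v)^2-1)}\big/((1+u)(1+v)-1)$, and show that the resulting persistence cost is $o(\sqrt n)$ by letting $\alpha_n$ and $C$ grow slowly.

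For part~(b), the map $a_i\mapsto(-1)^i a_i$ preserves the i.i.d.\ standard normal law and transforms $f_n(x)$ into $f_n(-x)$, so persistence on $(-\infty,0]$ has the same exponent as persistence on $[0,\infty)$, namely $-b\sqrt n$ by part~(a). To upgrade this to $-2b\sqrt n$ on $\R$, I would proceed in two complementary directions. For the upper bound $\leq-2b$, it suffices to control the smaller event $\{f_n>0 \text{ on } [-\sqrt n+\alpha_n,\sqrt n-\alpha_n]\}$; the Dembo--Mukherjee scheme applies on this symmetric window because the local correlation of $f_n$ at every interior point still converges (after normalisation) to $e^{-(s-t)^2/2}$, so the exponent is $-b$ per unit length over a length $2\sqrt n(1+o(1))$. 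For the lower bound $\geq-2b$, I would combine Pitt within each half-line with the observation that the cross-covariance $\E[f_n(x)f_n(-y)]=\sum_{i=0}^n(-xy)^i/i!$ is small relative to the variances on scales $|x|,|y|\asymp\sqrt n$, which permits decoupling the two halves via a Gaussian comparison (Slepian/Gordon variant) and then applying part~(a) twice.

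The principal obstacle is the tail estimate in part~(a). On the $\Theta(\sqrt n)$-long window $[\sqrt n,C\sqrt n]$, any naive comparison with a stationary Gaussian process yields a cost $\Theta(\sqrt n)$ in the log-persistence, which would destroy the sharp constant $b$. One must exploit that once $x$ exceeds $\sqrt n$ the truncated Weyl polynomial rapidly becomes \emph{rigid} (asymptotically determined by only a handful of top coefficients), so that the true persistence cost there is merely $o(\sqrt n)$. Quantifying this rigidification uniformly in~$n$, and matching its error terms with those of Lemma~\ref{lprp}, is the technical heart of the argument.
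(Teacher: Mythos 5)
Your overall plan correctly identifies the two pillars (the Dembo--Mukherjee restricted-window exponent and Gaussian positive association for gluing), and your factorisation of the far tail $[C\sqrt n,\infty)$ by pulling out $x^n/\sqrt{n!}$ is essentially the paper's argument for the regime $x\geq\sqrt n+\alpha_n$. Your lower bound for part~(b) --- decouple $\R_-$ and $\R_+$ with the Gaussian FKG inequality (valid because Lemma~\ref{lan}(i) gives nonnegative cross-covariance when $n$ is even) and apply part~(a) twice --- is a clean and valid shortcut that the paper does not take; the paper instead derives the lower bound from Lemma~\ref{lemcse3} plus boundary estimates. That part is a genuine, slightly more economical alternative.

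However, there are two places where what you describe is not yet a proof but the gap itself, and they are precisely where the paper does its real work.

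First, in part~(a), your split is $[0,\sqrt n-\alpha_n]\cup[\sqrt n-\alpha_n,C\sqrt n]\cup[C\sqrt n,\infty)$, with a middle window of length $\Theta(\sqrt n)$. You explicitly concede that a naive stationary comparison there gives cost $\Theta(\sqrt n)$, which would ruin the constant, and you appeal to ``rigidification'' without making it quantitative. The paper avoids the issue by placing the middle window at $[\sqrt n-\alpha_n,\sqrt n+\alpha_n]$, of length $2\alpha_n=o(\sqrt n)$, and pushing the far-tail threshold down to $\sqrt n+\alpha_n$. The key device is a change of gauge: multiply $f_n$ by $e^{-x^2/2}$ on $[\sqrt n-\alpha_n,\sqrt n]$ and by $x^{-n}$ on $[\sqrt n,\sqrt n+\alpha_n]$ so that the variance of the normalised process and that of its derivative become comparable; then Lemma~\ref{ldm}(ii) delivers a cost $\mu^{O(\alpha_n)}=e^{-o(\sqrt n)}$. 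Without this normalisation, variances of $f_n$ and $f_n'$ are not comparable and the Dembo--Mukherjee small-ball lemma does not apply; and with the tail starting at $\sqrt n+\alpha_n$ the relative noise in your factorisation has variance $O(\log^2 n)$ rather than $O(1)$, so the probability lower bound is $e^{-\mathrm{polylog}(n)}$ rather than uniformly positive --- which is still $e^{-o(\sqrt n)}$ and suffices.

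Second, for the upper bound in part~(b), you assert that ``the Dembo--Mukherjee scheme applies on this symmetric window because the local correlation converges to $e^{-(s-t)^2/2}$,'' but pointwise convergence is not enough. The scheme requires verifying either conditions (b1)+(b2) or (b1)+(b3) of Lemma~\ref{lem1}, and condition (b2) (a uniform two-sided sandwich by the limiting correlation) actually \emph{fails} when $st<0$: the correlation $A_n(s,t)=\sum_{i\le n}(st)^i/i!\big/(\ldots)$ with $st<0$ does not converge uniformly to $e^{-(t-s)^2/2}$ near the origin. The paper therefore verifies (b1)+(b3) instead, which in turn requires the one-sided estimates of Lemma~\ref{lan}(iii) on $e^{x}\sum_{i\le n}(-x)^i/i!$ and the three regime-specific Lemmas~\ref{lemc31}--\ref{lemc33} to control $p_k^2(u)$. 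This is not a routine re-application of the positive-axis argument, and your sketch does not supply it.

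In short: your architecture for part~(a) and part~(b)'s lower bound is sound (and part~(b)'s lower bound is even a nice simplification), but the middle-window normalisation in part~(a) and the (b3)-based verification for the signed window in part~(b) are missing, and they are exactly the new content of the paper.
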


The idea of the proof is as follows. At first we apply the method of Dembo and Mukherjee to obtain the persistence exponents for the interval $[0,\sqrt{n}-\alpha_n]$ and $[-\sqrt{n}+\alpha_n,\sqrt{n}-\alpha_n]$ with a suitable sequence $(\alpha_n)$. The  first interval   $[0,\sqrt{n}-\alpha_n]$ has been already mentioned in \cite{DM}. But for second interval, we need a careful verification. Then, as suggested by Dembo and Mukherjee, we will prove that the contribution from the complement intervals is negligible. 

The paper is organized as follows. In Section 2, we recall two key lemmas in \cite{DM}. The detailed proofs of Part (a) and Part (b) will be presented respectively in Section \ref{sec2} and Section \ref{sec3}.

\section{Preliminaries}
Since  $\{a_i\}$ are i.i.d. random variables of standard normal distribution, the Weyl random polynomial $f_n(x)$ is a  Gaussian process with autocorrelation function
\begin{equation}
A_n(x,y)=corr(f_n(x),f_n(y))= \frac{\sum_{i=0}^{n}\frac{(xy)^i}{i!}}{\sqrt{\sum_{i=0}^{n}\frac{x^{2i}}{i!}} \times \sqrt{\sum_{i=0}^{n}\frac{y^{2i}}{i!}}}.
\end{equation}
If $x$ and $y$ are fixed, then $A_n(x,y)$ converges to $e^{-(x-y)^2/2}$ as $n$ tends to infinity. It means that the sequence of Weyl  random polynomials converges weakly to the centered stationary Gaussian process $Z(t)$  with covariance function $R(t)=e^{-t^2/2}$. Then by heuristic arguments,  the persistence probability of Weyl  random polynomials might tend to the corresponding one of $Z(t)$.  However, in general, the limit implied by heuristic arguments is not true, see  \cite[p. 89]{DM} for a counterexample. To ensure the continuity of persistence exponents, we need some restrictive conditions on the  autocorrelation function. The following result which is combination of Theorem 1.6, Remark 1.7 and Lemma 1.8 in \cite{DM}  gives us such conditions.

\begin{lem} \label{lem1}  Let $\kS_{+}$ be the class of all non-negative autocorrelation functions. Then the following statements hold.
\begin{itemize}
\item[(a)] For centered stationary Gaussian process $\{Z_t\}_{t\geq 0}$ of autocorrelation $
A(s, t) = A(0, t-s) \in \kS_+$, the  nonnegative limit 
$$b(A)= - \lim_{T \rightarrow \infty} \frac{\log \pp (\inf_{0 \leq t \leq T} Z(t) >0)}{T} ,$$
exists.
\item[(b)]  Let $\{Z_t^{(k)}\}_{t\geq 0},\, 1\leq k \leq \infty$ be a sequence of centered Gaussian processes of unit variance and nonnegative autocorrelation functions $A_k(s,t)$, such that $A_{\infty}(s,t) \in \kS_+$. We consider the following conditions on the sequence of autocorrelation functions. 
\begin{itemize}
\item[(b1)] We have
\begin{equation*}\label{con1}
\underset{k,\tau \rightarrow \infty}{\lim\sup} \, \underset{s\geq 0}{\sup} \left\{\frac{\log A_k(s,s+\tau)}{\log \tau}\right\} <-1.
\end{equation*}
\item[(b2)] There exists a nonnegative autocorrelation function $D$ corresponding to some stationary Gaussian process such that for any finite $M$, there exist positive $\epsilon_k\rightarrow 0$ satisfying
 \begin{equation*}\label{con2}
(1-\epsilon_k) A_{\infty}(0,\tau)+\epsilon_k D(0,\tau) \leq A_k(s,s+\tau) \leq (1-\epsilon_k) A_{\infty}(0,\tau)+\epsilon_k,
\end{equation*}
for all $s,\tau$ such that $\tau\in [0,M]$ and both $s,s+\tau$ belong to the considering interval.
\item[(b3)] We have $A_k(s,s+\tau)\rightarrow A_{\infty}(0,\tau)$ pointwise and for some $\eta>1$,
\begin{equation*}\label{con3}
\underset{u\downarrow 0}{\lim\sup}|\log u|^{\eta} \underset{1\leq k \leq \infty}{\sup}p_k^2(u)<\infty,
\end{equation*}
where $p_k^2(u):=2-2\inf_{s\geq 0, \tau \in [0,u]}A_k(s,s+\tau)$.
\end{itemize}
 Assume that either (b1) and (b2) hold or (b1) and (b3) hold. Then 
\begin{equation*}
\underset{k,T \rightarrow \infty}{\lim} \frac{1}{T}\log \pp \left(Z_t^{(k)}>0,\, \forall t\in [0,T] \right)=-b(A_{\infty}).
\end{equation*}
\end{itemize} 
\end{lem}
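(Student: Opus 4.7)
The proof divides naturally into part (a), existence of $b(A)$, and part (b), its continuity under the stated approximation scheme.

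Part (a) is a subadditivity argument. By Pitt's theorem, for a centered Gaussian process with non-negative correlation, the positive events $\{\inf_{t\in[0,T_1]} Z_t>0\}$ and $\{\inf_{t\in[T_1,T_1+T_2]} Z_t>0\}$ are positively correlated; combined with stationarity this yields
\begin{equation*}
\pp\bigl(\inf_{[0,T_1+T_2]}Z>0\bigr)\ge \pp\bigl(\inf_{[0,T_1]}Z>0\bigr)\,\pp\bigl(\inf_{[0,T_2]}Z>0\bigr),
\end{equation*}
so $h(T):=-\log\pp(\inf_{[0,T]}Z>0)$ is subadditive. The two-point identity $\pp(Z_0>0,Z_T>0)=\tfrac{1}{4}+\tfrac{1}{2\pi}\arcsin A(0,T)>0$ shows $h$ is finite on a dense set of $T$, and Fekete's lemma produces $b(A)=\lim_{T\to\infty}h(T)/T=\inf_{T>0}h(T)/T\in[0,\infty)$.

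For part (b), the plan is a two-scale Slepian/Gordon comparison: condition (b1) handles long distances by nearly decoupling disjoint blocks, while (b2) or (b3) controls the behaviour within each block. First, for a large but fixed $L$, partition $[0,T]$ into blocks of length $L$ separated by gaps of length $\tau$; by (b1) there is $c>0$ with $\sup_{s\ge 0}A_k(s,s+\tau)\le \tau^{-(1+c)}$ for $k,\tau$ large, so applying Slepian after zeroing out these cross-block entries costs only a summable additive error, giving the near-factorisation $\pp(Z^{(k)}>0\text{ on }[0,T])\approx\prod_j \pp(Z^{(k)}>0\text{ on }I_j)$. Inside a block, condition (b2) pinches the local correlation between $(1-\epsilon_k)A_\infty+\epsilon_k D$ and $(1-\epsilon_k)A_\infty+\epsilon_k$, which lets us sandwich $Z^{(k)}|_{I_j}$ between $\sqrt{1-\epsilon_k}\,Z^{(\infty)}+\sqrt{\epsilon_k}\,W_k$ (lower envelope, $W_k$ stationary of covariance $D$) and $\sqrt{1-\epsilon_k}\,Z^{(\infty)}+\sqrt{\epsilon_k}\,G$ (upper envelope, $G\sim\mathcal{N}(0,1)$ constant in $t$) via Slepian. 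Conditioning on the added random input and invoking part (a) for $Z^{(\infty)}$, each block contributes log-probability $-b(A_\infty)L+o(L)$.

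If only (b3) is available instead of (b2), one first discretises: the hypothesis $\sup_k p_k^2(u)\lesssim|\log u|^{-\eta}$ with $\eta>1$ drives a Dudley-type modulus-of-continuity estimate that uniformly approximates the continuous-time persistence probability by its restriction to a fine grid $\Pi_\delta$. On the grid, the pointwise convergence $A_k\to A_\infty$ together with a Slepian interpolation (adding a vanishing independent Gaussian perturbation to enforce strict covariance ordering) produces the analogous per-block comparison. Summing over the $T/(L+\tau)$ blocks and sending $k,T\to\infty$, then $L\to\infty$, and finally $\tau,\delta$ to their limits, gives $\tfrac{1}{T}\log\pp(Z^{(k)}>0\text{ on }[0,T])\to-b(A_\infty)$.

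The main obstacle is reconciling the several scales in part (b): $\epsilon_k\to 0$, $L\to\infty$, $\tau\to\infty$, and $\delta\to 0$ must be arranged so that the per-block perturbation loss stays $o(L)$ while the cross-block Slepian error remains summable. Condition (b1) is precisely what ensures the separating gaps can be chosen to decouple blocks without wasting a positive fraction of $T$, and the pinching in (b2)---respectively the modulus control in (b3)---is what keeps the intra-block interpolation quantitative enough that the perturbation constants do not pollute the limiting exponent $b(A_\infty)$.
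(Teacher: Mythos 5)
You should note first that the paper does not prove this lemma at all: it is stated as a verbatim import of Theorem 1.6, Remark 1.7 and Lemma 1.8 of \cite{DM}, so the only meaningful comparison is with the proof given there. Your part (a) is essentially that argument: nonnegative correlation plus Slepian's inequality gives $\pp(\inf_{[0,T_1+T_2]}Z>0)\geq \pp(\inf_{[0,T_1]}Z>0)\,\pp(\inf_{[0,T_2]}Z>0)$ by stationarity, hence subadditivity of $h(T)=-\log\pp(\inf_{[0,T]}Z>0)$ and existence of the limit by Fekete. One quibble: the two-point identity $\pp(Z_0>0,Z_T>0)=\tfrac14+\tfrac{1}{2\pi}\arcsin A(0,T)$ does \emph{not} show $h(T)<\infty$, since positivity of a bivariate probability says nothing about $\pp(\inf_{[0,T]}Z>0)>0$; fortunately finiteness is not needed, as Fekete yields the limit in $[0,\infty]$, which is all the statement claims.

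For part (b) your outline has the right ingredients (block decomposition, gaps controlled by (b1), Gaussian envelopes from (b2), discretization for (b3)), but two decisive steps are missing or wrong as stated. First, Slepian is one-sided: zeroing out the nonnegative cross-block correlations only \emph{decreases} the joint positivity probability, so it gives $\pp(\text{joint})\geq\prod_j\pp(\text{block }j)$, which serves the lower bound; your claim that this "costs only a summable additive error" and yields a two-sided near-factorization does not follow from Slepian and is the crux of the upper bound. The way \cite{DM} close the upper bound is to dominate the correlation of $Z^{(k)}$ by that of $\sqrt{1-\epsilon}\,\widetilde Z+\sqrt{\epsilon}\,G$ with $\widetilde Z$ independent across blocks (possible because (b1) makes cross-block correlations $\leq\epsilon$ and (b2)/(b3) control them within blocks), and then condition on $G$. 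But after conditioning one is left with the persistence probability of $Z^{(\infty)}$ above a small \emph{negative} level, and proving that this level perturbation does not change the exponent $b(A_\infty)$ is a genuine lemma in \cite{DM}; your sketch never addresses it, and without it the upper bound does not close. Second, in the (b3) route the reduction to a fine grid and back, with matching exponents, is exactly the content of \cite[Lemma 1.8]{DM}; invoking a "Dudley-type modulus" is an assertion of the needed estimate, not a proof of it. So part (a) is essentially correct, while part (b) is an outline with genuine gaps at precisely the technically hard points.
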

While Lemma \ref{lem1} shows the convergence of persistence exponent of general Gaussian process under strict conditions of autocorrelation function, Lemma \ref{ldm} below provides a lower bound on the persistence probability of a differentiable Gaussian process $Z(t)$, assuming a simple condition that the variances of $Z(t)$ and $Z'(t)$ are comparable. 
\begin{lem}  \label{ldm} \cite[Lemma 4.1]{DM} There is a universal constant $\mu \in (0,1)$, such that the following statements hold.
\begin{itemize}
\item[(i)] If $(Z_t)_{t \in [a,b]}$ is a $\kC^1$ process satisfying 
\begin{equation*}
2(b-a)^2 \sup \limits_{t \in [a,b]} \E (Z_t'^2) \leq \sup \limits_{t \in [a,b]} \E (Z_t^2), 
\end{equation*} 
then 
\begin{equation*}
\pp \left( \inf_{t \in [a,b]} Z_t > 0 \right) \geq \mu.
\end{equation*}
\item[(ii)] If $(Z_t)_{t \in [0, \beta_n]}$, with $\beta_n \rightarrow \infty$, is a $\kC^1$ Gaussian process with nonneqative autocorelation satisfying for all $t \leq \beta_n$
\begin{equation*}
2\vartriangle^2  \E (Z_t'^2) \leq \E (Z_t^2), 
\end{equation*} 
for some positive constant $\vartriangle$, then 
\begin{equation*}
\pp \left( \inf_{t \in [0,\beta_n]} Z_t > 0 \right) \geq \mu^{\lceil \frac{\beta_n}{\vartriangle}\rceil}.
\end{equation*}
\end{itemize}
\end{lem}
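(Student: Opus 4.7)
The statement has two parts, and my plan is to prove (i) first and then deduce (ii) by a subdivision argument combined with a Gaussian positive-correlation inequality.

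For part (i), the plan is to decompose the process into a single ``bulk'' random variable and an independent remainder. Let $M^2 = \sup_{t \in [a,b]} \E Z_t^2$ and pick $t^* \in [a,b]$ attaining this supremum (possible by continuity on the compact interval). From $Z_t - Z_{t^*} = \int_{t^*}^t Z'_s\, ds$ and Cauchy--Schwarz,
$$\sup_{t \in [a,b]} (Z_t - Z_{t^*})^2 \leq (b-a) \int_a^b (Z'_s)^2\, ds,$$
so the hypothesis yields $\E \sup_t (Z_t - Z_{t^*})^2 \leq M^2/2$. I would then write $Z_t = \alpha_t Z_{t^*} + W_t$ with $\alpha_t = \E[Z_t Z_{t^*}]/M^2$ and $W_t$ (Gaussian) independent of $Z_{t^*}$. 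Cauchy--Schwarz gives $|\alpha_t - 1| \leq \|Z_t - Z_{t^*}\|_2/M \leq 1/\sqrt{2}$, so $\alpha_t \geq 1 - 1/\sqrt{2} > 0$; moreover $W_{t^*} = 0$ and $\E(W'_t)^2 \leq \E (Z'_t)^2$ by independence, so the same Cauchy--Schwarz step applied to $W$ gives $\E \sup_t W_t^2 \leq M^2/2$. The inclusion
$$\{Z_{t^*} > 0\} \cap \{\sup_t (-W_t) < (1 - 1/\sqrt{2})\, Z_{t^*}\} \subseteq \{\inf_t Z_t > 0\}$$
factorises on conditioning thanks to independence of $Z_{t^*}$ and $W$, giving
$$\pp(\inf_t Z_t > 0) \geq \int_0^\infty \pp\bigl(\sup_t(-W_t) < (1-1/\sqrt{2}) z \bigr)\, \frac{1}{\sqrt{2\pi}\, M} e^{-z^2/(2M^2)}\, dz.$$
After rescaling $z = Mu$ the right-hand side is a universal constant $\mu \in (0,1)$, positive by the exponential concentration of $\sup_t |W_t|$ (Borell--TIS applied with variance proxy $\leq M^2/2$) integrated against the fixed standard Gaussian density of $Z_{t^*}/M$.

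For part (ii), I would subdivide and multiply. Partition $[0, \beta_n]$ into $N = \lceil \beta_n/\Delta \rceil$ consecutive subintervals $I_j$ of length at most $\Delta$. The hypothesis $2\Delta^2\, \E(Z'_t)^2 \leq \E Z_t^2$ is exactly the assumption of part (i) on each $I_j$, so $\pp(\inf_{I_j} Z_t > 0) \geq \mu$. To combine these I would invoke Slepian's inequality (equivalently the Gaussian FKG / Khatri--Sidak bound for orthant events): since $Z$ has nonnegative autocorrelation, for any finite collection of sample times the event that $Z$ is simultaneously positive factorises with a product lower bound over the partition. Passing to the limit along increasingly fine grids,
$$\pp\bigl(\inf_{t \in [0,\beta_n]} Z_t > 0\bigr) \geq \prod_{j=1}^N \pp\bigl(\inf_{I_j} Z_t > 0\bigr) \geq \mu^N,$$
which is the claim.

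The main obstacle lies in part (i): extracting a truly universal $\mu$ requires that the tail control on $\sup_t |W_t|$ be exponential rather than polynomial, since a Markov bound on the $W$-supremum is too weak once integrated against the Gaussian density of $Z_{t^*}$. The key structural move is the decomposition $Z_t = \alpha_t Z_{t^*} + W_t$ into independent pieces, rather than the naive split $Z_t = Z_{t^*} + (Z_t - Z_{t^*})$ in which the two summands are correlated; this independence is what allows the final probability to factorise after conditioning, and the scale-invariance in $M$ of the resulting integral is what makes the constant genuinely universal.
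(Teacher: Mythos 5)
Your proposal is correct, and on part (ii) it is essentially the paper's own argument: the paper likewise partitions $[0,\beta_n]$ into $\lceil \beta_n/\vartriangle\rceil$ subintervals of length $\vartriangle$, notes that the pointwise hypothesis $2\vartriangle^2\E(Z_t'^2)\le \E(Z_t^2)$ gives the hypothesis of (i) on each subinterval, and multiplies the resulting bounds by Slepian's inequality (using the nonnegative autocorrelation), exactly as you do. The genuine difference is in part (i): the paper does not prove it at all --- it simply quotes Lemma 4.1 of Dembo and Mukherjee --- whereas you give a self-contained proof via the orthogonal decomposition $Z_t=\alpha_t Z_{t^*}+W_t$ at the maximal-variance point, with $W$ independent of $Z_{t^*}$, $W_{t^*}=0$, $\alpha_t\ge 1-1/\sqrt{2}$ and $\E\sup_t W_t^2\le M^2/2$; this is a valid and more informative route, and it buys a transparent reason why $\mu$ is universal (scale invariance in $M$ after conditioning on $Z_{t^*}$). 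Two remarks. First, your argument uses that $Z$ is a \emph{centered Gaussian} process (joint Gaussianity is what turns uncorrelatedness of $W_t$ and $Z_{t^*}$ into independence); the statement of (i) here says only ``$\kC^1$ process'', but this is a slip in the paper, since the cited result of Dembo--Mukherjee concerns centered Gaussian processes and that is the only case ever used. Second, your closing claim that an exponential (Borell--TIS) tail bound is indispensable is not accurate: since $\E\sup_t W_t^2\le M^2/2$, Chebyshev already gives $\pp\bigl(\sup_t(-W_t)<(1-1/\sqrt{2})Mu\bigr)\ge 1-\tfrac{1}{2(1-1/\sqrt{2})^2u^2}\ge \tfrac12$ for all $u$ beyond a fixed universal threshold, and integrating the constant $\tfrac12$ against the standard Gaussian density beyond that threshold already yields a universal $\mu>0$; Borell--TIS of course also works, it is just not needed. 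Finally, in (ii) the Slepian product bound is proved for finitely many sample points and must be transferred to the infimum over the interval (a standard limiting step via dense grids), a point on which the paper is equally terse, so this is not a gap relative to the paper's level of detail.
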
  
\begin{proof}
The part (i) is exactly Lemma 4.1 in \cite{DM}. The part (ii) is a direct consequence of (i). Indeed, we divide the interval $[0, \beta_n]$ into $\lceil \tfrac{\beta_n}{\vartriangle}\rceil$ small intervals of length $\vartriangle$. Then the condition of (i) is verified in each small interval. Thus using Slepian lemma and (i), we get (ii).
\end{proof}
\section{Persistence probability on half nonnegative axis}\label{sec2}
It is clear that  Part (a) of Theorem \ref{mth} follows from the following  upper bound and  lower bound on the persistence probability  
\begin{equation} \label{uop}
\underset{n \rightarrow \infty}{\lim\sup} \, \, \frac{1}{\sqrt{n}} \log \pp \left(f_n(x)>0,\, \forall x\in [0,\infty) \right)\leq -b,
\end{equation}
and 
\begin{equation} \label{lops}
\underset{n \rightarrow \infty}{\lim\inf} \, \, \frac{1}{\sqrt{n}} \log \pp \left(f_n(x)>0,\, \forall x\in [0,\infty) \right)\geq -b,
\end{equation} 
with $b$ as in \eqref{exp}.

Before proving \eqref{uop} and \eqref{lops}, we recall a result in \cite{DM} that gives us the estimate of  persistence probability on the main subinterval of $\R_+$. By verifying  the conditions (b1) and (b2) in Lemma \ref{lem1}, Dembo and Mukherjee obtain in \cite[Remark 1.11]{DM} the persistence probability of  Weyl polynomials on  interval $[0, \sqrt{n} - \alpha_n]$,  for any sequence $(\alpha_n)$ satisfying $\alpha_n \rightarrow \infty$ but $\alpha_n = o(\sqrt{n})$. 
\begin{lem} \label{lprp} \cite[Remark 1.11]{DM}
We have 
\begin{equation*}
\underset{n \rightarrow \infty}{\lim}  \, \, \frac{1}{\sqrt{n}} \log \pp \left(f_n(x)>0,\, \forall x\in [0,\sqrt{n}-\alpha_n] \right)=-b,
\end{equation*}
with $b$ as in \eqref{exp}.
\end{lem}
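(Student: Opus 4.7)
The plan is to deduce the lemma from Lemma~\ref{lem1}(b) applied with $Z_t^{(n)} := f_n(t)$ indexed by $t \in [0, \beta_n]$, where $\beta_n := \sqrt{n} - \alpha_n$, and with limiting stationary process $Z$ of \eqref{exp} whose autocorrelation is $A_\infty(0,\tau) = e^{-\tau^2/2}$. Once the hypotheses of that lemma are verified for this family, its conclusion yields
\begin{equation*}
\beta_n^{-1}\log\pp\left(f_n(x)>0,\ \forall\, x\in[0,\beta_n]\right)\longrightarrow -b(A_\infty)=-b,
\end{equation*}
and the claim follows since $\beta_n/\sqrt{n}\to 1$.

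The technical heart is a uniform approximation of $A_n(x,y)$ by $e^{-(x-y)^2/2}$ on $[0,\beta_n]^2$. Writing $\sum_{i=0}^{n} \lambda^i/i! = e^{\lambda}\,\pp(X_\lambda\le n)$ for $X_\lambda\sim\mathrm{Poisson}(\lambda)$ yields the factorisation
\begin{equation*}
A_n(x,y) \;=\; e^{-(x-y)^2/2}\cdot\rho_n(x,y),\qquad \rho_n(x,y) := \frac{\pp(X_{xy}\le n)}{\sqrt{\pp(X_{x^2}\le n)\,\pp(X_{y^2}\le n)}}.
\end{equation*}
For $x,y\in[0,\beta_n]$ all three Poisson parameters lie in $[0,\beta_n^2]$, and since $\beta_n^2\le n-2\sqrt{n}\,\alpha_n+\alpha_n^2$, a standard Chernoff/Bennett bound gives $\pp(X_\lambda>n)\le\exp(-c\alpha_n^2)$ uniformly in such $\lambda$, which vanishes as $\alpha_n\to\infty$.

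Given this, condition (b1) is immediate: for $\tau\ge 1$ we have $\rho_n(s,s+\tau)\le 2$ and hence $A_n(s,s+\tau)\le 2\,e^{-\tau^2/2}$, so $\log A_n(s,s+\tau)/\log\tau\to -\infty<-1$. For (b2), I would fix $M>0$ and Taylor-expand the Poisson ratio near $y=x$; the linear-in-$\tau$ variation of the numerator $\pp(X_{xy}\le n)$ cancels against that of the geometric mean in the denominator, leaving $\rho_n(s,s+\tau)-1 = O(\tau^2 e^{-c\alpha_n^2})$ uniformly for $\tau\in[0,M]$ and $s, s+\tau \in [0,\beta_n]$. With the comparison function $D(0,\tau) := e^{-\tau^2}$ (a valid Gaussian autocorrelation dominated by $A_\infty$) and $\varepsilon_n := C\,e^{-c\alpha_n^2} \to 0$, the sandwich in (b2) holds because the gap $1-A_\infty(0,\tau)\sim\tau^2/2$ on the right-hand side absorbs the residual $\tau^2$ in $\rho_n-1$.

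The main obstacle is maintaining the Poisson tail control uniformly near the right endpoint of $[0,\beta_n]$, where the parameter $\lambda = x^2$ approaches $n$; this is precisely why the hypothesis $\alpha_n\to\infty$ is essential, since a bounded $\alpha_n$ would leave $\pp(X_{\beta_n^2}>n)$ at a positive constant, producing an $O(1)$ multiplicative correction in $\rho_n$ and destroying the sandwich of (b2). Tracking the linear-in-$\tau$ cancellation in the Taylor expansion of $\rho_n$ is the other delicate point, but once both estimates are in hand the remaining verification of Lemma~\ref{lem1}(b) is routine.
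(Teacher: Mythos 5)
Your proposal takes essentially the same route as what the paper itself does for this lemma: the paper imports the result directly from Dembo and Mukherjee (\cite[Remark 1.11]{DM}) and explains that their proof verifies conditions (b1) and (b2) of Lemma~\ref{lem1} via the Poisson-tail estimate of Lemma~\ref{lan}(ii), which is exactly the factorisation $A_n(x,y)=e^{-(x-y)^2/2}\rho_n(x,y)$ and the uniform Chernoff bound you use. Your Taylor-expansion justification of the $O(\tau^2)$ cancellation in $\rho_n-1$ near $\tau=0$ is the right extra detail needed to make the sandwich in (b2) close (one must track that $(n-\lambda)p_n(\lambda)$ stays small up to $\lambda=\beta_n^2$, which holds since $p_n$ decays faster than $(n-\lambda)^{-1}$ grows), and it matches the strategy the paper attributes to [DM] rather than offering a genuinely different argument.
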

In throughout this paper, we always consider the  sequence $(\alpha_n)$ defined as
$$\alpha_n= \frac{\sqrt{n}}{ \log n}.$$

\vspace{0.2 cm}
\noindent {\bf Proof of \eqref{uop}}. The upper bound \eqref{uop} can be easily deduced  from  Lemma \ref{lprp} and a  simple observation that
$$\pp \left(f_n(x)>0,\, \forall x\in [0,\infty) \right) \leq \pp \left(f_n(x)>0,\, \forall x\in [0,\sqrt{n}-\alpha_n] \right).$$

\vspace{0.2 cm}
\noindent {\bf Proof of \eqref{lops}}. 
By using Slepian inequality (see \cite[Theorem 2.2.1]{Adler}),  we get
\begin{eqnarray}
P_n &=&\pp(f_n(x) >0 \,\, \forall \,\, x \geq 0 ) \notag \\
&\geq& \pp(f_n(x) >0 \,\, \forall \,\, 0 \leq x  \leq \sqrt{n} - \alpha_n)  \times \pp(f_n(x) >0 \,\, \forall \,\,  \sqrt{n} - \alpha_n \leq x \leq \sqrt{n} + \alpha_n) \notag \\
&& \times \pp(f_n(x) >0 \,\, \forall \,\,   x \geq \sqrt{n} + \alpha_n) =: A_n \times B_n \times C_n.  \label{pabc}
\end{eqnarray}
 Thank to the inequality \eqref{pabc},  the lower bound \eqref{lops} follows  from the following claims
   \begin{itemize} 
   \item[(c1)] $$\liminf_{n \rightarrow \infty} \frac{\log A_n}{\sqrt{n}} =-b,$$
 \item[(c2)] $$\liminf_{n \rightarrow \infty} \frac{\log B_n}{\sqrt{n}} \geq 0,$$
  \item[(c3)] $$\liminf_{n \rightarrow \infty} \frac{\log C_n}{\sqrt{n}} \geq 0.$$
\end{itemize}  
{\it Proof of Claim (c1)}.  This claim is a sequence of Lemma \ref{lprp}. \hfill $\square$

\vspace{0.2 cm}
\noindent {\it Proof of Claim (c3).} We first observe that for all  $0 \leq i \leq n-1$,  
\begin{eqnarray*}
 \frac{x^{i}}{\sqrt{i!}} = \frac{\sqrt{i+1}}{x}\frac{x^{i+1}}{\sqrt{(i+1)!}} \leq \frac{\sqrt{n}}{x} \frac{x^{i+1}}{\sqrt{(i+1)!}}
\end{eqnarray*}
Hence, for all  $0 \leq i \leq n-1$,  
\begin{eqnarray*}
 \frac{x^{i}}{\sqrt{i!}}  \leq \left(\frac{\sqrt{n}}{x} \right)^{n-i} \frac{x^{n}}{\sqrt{n!}}.
\end{eqnarray*}
Therefore, if $x \geq \sqrt{n} + \alpha_n$ then 
\begin{eqnarray*}
\sum \limits_{i=0}^{n-1} \frac{x^{i}}{\sqrt{i!}}  \leq \frac{x^{n}}{\sqrt{n!}} \sum \limits_{k=1}^{n} \left(\frac{\sqrt{n}}{x} \right)^{k} \leq  \frac{x^{n}}{\sqrt{n!}} \frac{\sqrt{n}}{x- \sqrt{n}} \leq \frac{x^{n}}{\sqrt{n!}} \frac{\sqrt{n}}{\alpha_n} = \frac{x^{n}}{\sqrt{n!}} \log n .
\end{eqnarray*}
Consequently, for all $x \geq \sqrt{n} + \alpha_n$,
\begin{equation} \label{fnm}
|f_{n-1}(x)| =  \Big|\sum \limits_{i=0}^{n-1} a_i \frac{x^{i}}{\sqrt{i!}} \Big| \leq \max \limits_{0 \leq i \leq n-1} |a_i| \times \sum \limits_{i=0}^{n-1} \frac{x^{i}}{\sqrt{i!}} \leq \max \limits_{0 \leq i \leq n-1} |a_i| \times \log n \frac{x^{n}}{\sqrt{n!}}.
\end{equation}
We have $f_n(x)=f_{n-1}(x) + a_n x^n/\sqrt{n!}$. Therefore, using \eqref{fnm},  
\begin{eqnarray}
\pp(f_n(x) >0 \, \forall x \geq \sqrt{n} + \alpha_n) &\geq& \pp \left( a_n \frac{x^{n}}{\sqrt{n!}} \geq 2 |f_{n-1}(x)| \,\,  \forall x \geq \sqrt{n} + \alpha_n \right) \notag \\
&\geq& \pp \left( a_n  \geq 2 \log n \max \limits_{0 \leq i \leq n-1} |a_i| \right) \notag \\
&\geq & \pp \left( a_n  \geq 2 \log^2 n \right) \pp\left( \max \limits_{0 \leq i \leq n-1} |a_i| \leq \log n\right) \notag \\
& = & \pp \left( a_n  \geq 2 \log^2 n \right) \pp\left( |a_0| \leq \log n\right) ^n\notag \\
&\geq & \exp \left(- 4(\log n)^4\right), \notag
\end{eqnarray}
for all $n$ large enough. This estimate implies (c3). \hfill $\square$

\vspace{0.2 cm}
To prove Claim (c2), we will use Lemma \ref{ldm} to show that the persistence probability of $f_n$ on the interval $[ \sqrt{n} - \alpha_n, \sqrt{n}+ \alpha_n]$ is greater than $\exp(-c \alpha_n)$ for some $c>0$. To directly apply Lemma \ref{ldm},  we need to verify  that the variance of  $f_n(x)$ is comparable with the one  of $f_n'(x)$ for $x \in [ \sqrt{n} - \alpha_n, \sqrt{n}+ \alpha_n]$. Unfortunately, this fact is not true. To overcome this difficulty, a natural idea is to multiply $f_n$ by a positive non-random function, say $r_n$, such that $\E((r_nf_n)^2(x))$ is comparable with $\E((r_nf_n)'^2(x))$. Then applying Lemma \ref{ldm} for the process $r_nf_n$, we obtain the lower bound on the persistence probability of $r_nf_n$, and of $f_n$ also. By direct calculations, we observe that the variance of $f_n(x)$ behaves differently when $x$ crosses the value $\sqrt{n}$. Hence, we will choose the function $r_n$   differently in two intervals $[ \sqrt{n} - \alpha_n, \sqrt{n}]$ and $[ \sqrt{n}, \sqrt{n}+ \alpha_n]$. The detailed computations are carried out as follows.

\vspace{0.2 cm}
\noindent {\it Proof of Claim (c2).} 
Using Slepian inequality  again, we have 
\begin{eqnarray}
B_n  &= & \pp(f_n(x) >0 \,\, \forall \,\,  \sqrt{n} - \alpha_n \leq x \leq \sqrt{n} + \alpha_n) \notag \\ 
 &\geq &  \pp(f_n(x) >0 \,\, \forall \,\,  \sqrt{n} - \alpha_n \leq x \leq \sqrt{n} ) 
\times  \pp(f_n(x) >0 \,\, \forall \,\,  \sqrt{n} \leq x \leq \sqrt{n} + \alpha_n). \notag
\end{eqnarray}
Therefore, to prove the claim (c2), it suffices to show that 
\begin{equation} \label{pbm}
\liminf \limits_{n \rightarrow \infty} \frac{1}{\sqrt{n}} \log \pp(f_n(x) >0 \,\, \forall \,\,  \sqrt{n} - \alpha_n \leq x \leq \sqrt{n} ) \geq 0,
\end{equation}
and 
\begin{equation} \label{pbh}
\liminf \limits_{n \rightarrow \infty} \frac{1}{\sqrt{n}} \log \pp(f_n(x) >0 \,\, \forall \,\,  \sqrt{n} \leq x \leq \sqrt{n} + \alpha_n  )  \geq  0.
\end{equation}

\noindent {\it Proof of   \eqref{pbm}.} We define for $x \in [\sqrt{n} - \alpha_n,  \sqrt{n} ]$
$$g_n(x)=e^{-x^2/2} f_n(x).$$
We first show that 
\begin{eqnarray} \label{gbg}
 2 \vartriangle^2 \E(g_n'(x)^2) \leq \E(g_n(x)^2), 
 \end{eqnarray}
 for some positive constant $\vartriangle$. Then applying Lemma \ref{ldm} (ii), we get
  \begin{eqnarray*}
 \pp \left( \inf \limits_{x \in [\sqrt{n}- \alpha_n, \sqrt{n}]} f_n(x) > 0 \right) =  \pp \left( \inf \limits_{x \in [\sqrt{n}- \alpha_n, \sqrt{n}]} g_n(x) > 0 \right) \geq \mu^{ \lceil \alpha_n/ \vartriangle \rceil},
 \end{eqnarray*}
 which  implies \eqref{pbm} by using that $\alpha_n = o(\sqrt{n})$.  Now it remains to prove \eqref{gbg}. 

We observe that 
\begin{eqnarray*}
g_n'(x) &=& e^{-x^2/2} \left( f_n'(x) -x f_n(x) \right) \\
&=& e^{-x^2/2} \left( \sum \limits_{i=1}^n ia_i \frac{x^{i-1}}{\sqrt{i!}} - \sum \limits_{i=0}^n a_i \frac{x^{i+1}}{\sqrt{i!}} \right) \\
&=& e^{-x^2/2} \sum \limits_{i=0}^n a_i \left( \frac{i}{x} -x \right)\frac{x^{i}}{\sqrt{i!}}.
\end{eqnarray*}
Therefore,
\begin{eqnarray*}
\E(g_n(x)^2) &= & e^{-x^2} \sum \limits_{i=0}^n \frac{x^{2i}}{i!}, \label{gb} \\
\E(g_n'(x)^2) &= & e^{-x^2} \sum \limits_{i=0}^n  \left( \frac{i}{x} -x \right)^2\frac{x^{2i}}{i!}. \label{gpb}
\end{eqnarray*} 
We denote 
$$k=[x^2] \in (n-2 \sqrt{n} \alpha_n , n).$$
Then we have
\begin{eqnarray}
\E(g_n(x)^2)   &\geq& e^{-x^2} \sum \limits_{i=k-[\sqrt{k}]}^k \frac{x^{2i}}{i!} = \frac{e^{-x^2} x^{2k}}{k!} \sum \limits_{i=k-[\sqrt{k}]}^k \frac{x^{2i-2k} k!}{i!} \notag \\
&=&  \frac{e^{-x^2} x^{2k}}{k!} \sum \limits_{j=0}^{[\sqrt{k}]} \frac{x^{-2j} k!}{(k-j)!} \geq  \frac{e^{-x^2} x^{2k}}{k!} \sum \limits_{j=0}^{[\sqrt{k}]} x^{-2j} (k-j)^j \notag \\
\hspace{-1 cm} (\textrm{use } k=[x^2]) \hspace{2 cm} &= & \frac{e^{-x^2}x^{2k}}{k!} \sum \limits_{j=0}^{[\sqrt{k}]} \left( \frac{k-j}{x^2} \right)^j \geq \frac{e^{-x^2}x^{2k}}{k!} \sum \limits_{j=0}^{[\sqrt{k}]} \left( 1- \frac{j+1}{x^2} \right)^j \notag \\
&\geq & \frac{e^{-x^2} x^{2k}}{k!} \frac{[\sqrt{k}]+1}{3}, \label{lgb}
\end{eqnarray}
where we have used that  
$$\left( 1- \frac{j+1}{x^2} \right)^j \geq 1/3,$$
as $(j+1)j \leq ([\sqrt{k}]+1) [\sqrt{k}] \leq x^2$.  Similarly, 
\begin{eqnarray}
\E(g_n'(x)^2) &=& e^{-x^2}\sum \limits_{i=0}^n \left( \frac{i-x^2}{x} \right)^2 \frac{x^{2i}}{i!} \notag  \\
 \hspace{-1 cm} (\textrm{use } k=[x^2]) \hspace{1 cm}  &\leq& 2 e^{-x^2}\sum \limits_{i=0}^n \frac{(i-k)^2+1}{x^2} \frac{x^{2i}}{i!} = \frac{e^{-x^2} x^{2k}}{k!} \frac{2}{x^2} \sum \limits_{i=0}^n [(i-k)^2+1]  \frac{x^{2i-2k} k!}{i!}\notag \\
&=&\frac{e^{-x^2}x^{2k}}{k!} \frac{2}{x^2} \sum \limits_{j=-k}^{n-k} (j^2+1)  \frac{x^{-2j} k!}{(k-j)!}  \leq \frac{e^{-x^2}x^{2k}}{k!} \frac{2}{x^2} \sum \limits^{n-k}_{j=-k} (j^2+1)  \frac{k^{-j} k!}{(k-j)!}  \notag \\
&=& \frac{e^{-x^2}x^{2k}}{k!} \frac{2}{x^2} \left[ \sum \limits_{j=0 }^{n-k} (j^2+1)  \frac{k^{j} k!}{(k+j)!} + \sum \limits_{j=1}^k (j^2+1)  \frac{k^{-j} k!}{(k-j)!}  \right]. \label{ths}
\end{eqnarray}
For the first sum, we observe that 
\begin{eqnarray*}
\frac{(k+j)!}{k!} \geq k^{j/2} \left(k+ \frac{j}{2}\right)^{j/2}.
\end{eqnarray*}
Thus 
\begin{eqnarray*}
\frac{k!k^j}{(k+j)!} \leq\left(\frac{k}{k+j/2}\right)^{j/2}= \left(1-\frac{j}{2k+j}\right)^{j/2} \leq \exp\left(\frac{-j^2}{2(2k+j)}\right) \leq e^{-j^2/2n}.
\end{eqnarray*}
Here, we used that $(1-x)^y\leq e^{-xy}$ for all $x \in(0,1)$ and $y>0$, and $2k \geq 2 x^2-2 \geq n$.
Therefore,  by integral approximation,
\begin{eqnarray}
\sum \limits_{j=0 }^{n-k} (j^2+1)  \frac{k^{j} k!}{(k+j)!} \leq \sum \limits_{j=0 }^{n-k} (j^2+1) e^{-j^2/2n} = \kO(n \sqrt{n}).  \label{ttm}
\end{eqnarray}
To estimate the second sum of \eqref{ths}, by using  Cauchy inequality we get 
\begin{eqnarray*}
\frac{k!}{(k-j)! k^j} &=& \frac{k \ldots (k-j+1)}{k^j} \leq \left( \frac{kj - \frac{j(j-1)}{2}}{kj} \right)^j = \left( 1- \frac{j-1}{2k} \right)^j \\
& \leq & \exp \left( - \frac{j(j-1)}{2k}\right) \leq \sqrt{e} \exp \left( - \frac{j^2}{2k}\right) \leq \sqrt{e} \exp \left( - \frac{j^2}{2n}\right). 
\end{eqnarray*}
Hence, using integral approximation again,
 \begin{eqnarray}
\sum \limits_{j=1}^k (j^2+1)  \frac{k^{-j} k!}{(k-j)!} &\leq&  \sqrt{e}  \sum \limits_{j=1}^k (j^2+1) \exp \left( - \frac{j^2}{2n}\right)  =  O(n\sqrt{n}). \label{tth}
 \end{eqnarray}
 Combining \eqref{ths}, \eqref{ttm} and \eqref{tth}, we have 
 \begin{eqnarray}
\E(g_n'(x)^2) \leq C \frac{e^{-x^2}x^{2k}}{k!} n\sqrt{n}, \label{lgpb}
 \end{eqnarray}
 for some positive constant $C$. Now, we can deduce \eqref{gbg} from \eqref{lgb} and \eqref{lgpb}.

 \vspace{0.2 cm}
 \noindent {\it Proof of  \eqref{pbh}.} We use the same arguments as for \eqref{pbm}. Define for $x \in [ \sqrt{n}, \sqrt{n} + \alpha_n ]$
$$h_n(x)=x^{-n} f_n(x).$$
Then 
\begin{eqnarray*}
h_n'(x) &=& x^{-n} \left( f_n'(x) - (n/x) f_n(x) \right) \\
&=& x^{-n} \left( \sum \limits_{i=1}^n ia_i \frac{x^{i-1}}{\sqrt{i!}} - \sum \limits_{i=0}^n a_i \frac{n x^{i-1}}{\sqrt{i!}} \right) \\
&=& x^{-n} \sum \limits_{i=0}^n a_i  \frac{(i-n)}{x} \frac{x^{i}}{\sqrt{i!}}.
\end{eqnarray*}
Therefore,
\begin{eqnarray}
\E(h_n(x)^2) &= & x^{-2n} \sum \limits_{i=0}^n \frac{x^{2i}}{i!} \label{hb} \\
\E(h_n'(x)^2) &= & x^{-2n} \sum \limits_{i=0}^n  \frac{(i-n)^2}{x^2}\frac{x^{2i}}{i!}. \label{hpb}
\end{eqnarray} 
To estimate   $\E(h_n(x)^2)$, we observe that
\begin{eqnarray}
\sum \limits_{i=0}^n \frac{x^{2i}}{i!}  &\geq& \sum \limits_{i=n-[\sqrt{n}]}^n \frac{x^{2i}}{i!} = \frac{x^{2n}}{n!} \sum \limits_{i=n-[\sqrt{n}]}^n \frac{x^{2i-2n} n!}{i!} \notag \\
&=& \frac{x^{2n}}{n!} \sum \limits_{j=0}^{[\sqrt{n}]} \frac{x^{-2j} n!}{(n-j)!} \geq  \frac{x^{2n}}{n!} \sum \limits_{j=0}^{[\sqrt{n}]} x^{-2j} (n-j)^j \notag \\
&= &  \frac{x^{2n}}{n!} \sum \limits_{j=0}^{[\sqrt{n}]} \left(\frac{n}{x^2} \right)^{j} \left(\frac{n-j}{n} \right)^{j} \notag \\
&\geq & \frac{x^{2n}}{3n!} \sum \limits_{j=0}^{[\sqrt{n}]} \left(\frac{n}{x^2} \right)^{j}, \label{lhb}
\end{eqnarray}
where we used that for $n$ large enough and $j \leq n$, 
$$\left( 1- \frac{j}{n} \right)^j \geq 1/3.$$
On the other hand, to estimate $\E(h_n'(x)^2)$, we have
\begin{eqnarray} \label{hmu}
\sum \limits_{i=0}^n \left( \frac{i-n}{x} \right)^2 \frac{x^{2i}}{i!}  &=& \frac{x^{2n}}{x^2n!} \sum \limits_{i=0}^n (i-n)^2  \frac{x^{2i-2n} n!}{i!} = \frac{x^{2n}}{x^2n!} \sum \limits_{j=0}^n j^2  \frac{x^{-2j} n!}{(n-j)!} \notag \\
&=&\frac{x^{2n}}{x^2n!} \sum \limits_{j=0}^n j^2  \left(\frac{n}{x^2} \right)^j \frac{ n!}{(n-j)!n^j}. \label{cl}
\end{eqnarray}
It follows from Cauchy inequality that 
\begin{eqnarray*}
\frac{ n!}{(n-j)!n^j} &\leq&   \left( \frac{nj - \frac{j(j-1)}{2}}{nj} \right)^j = \left( 1- \frac{j-1}{2n} \right)^j \\
& \leq & \exp \left( - \frac{j(j-1)}{2n}\right) \leq  \sqrt{e} \exp \left( - \frac{j^2}{2n}\right). 
\end{eqnarray*}
Combining this inequality  with \eqref{hmu}, we get
\begin{eqnarray}
& &\sum \limits_{i=0}^n \left( \frac{i-n}{x} \right)^2 \frac{x^{2i}}{i!}  \leq  \frac{\sqrt{e} x^{2n}}{x^2n!} \sum \limits_{j=0}^n j^2  \left(\frac{n}{x^2} \right)^j \exp \left( - \frac{j^2}{2n}\right) \notag \\
 &\leq & \frac{\sqrt{e} x^{2n}}{x^2n!} \left[ \sum \limits_{j=0}^{[\sqrt{n}]} j^2  \left(\frac{n}{x^2} \right)^j  +  \sum \limits_{j=[\sqrt{n}]}^n j^2  \left(\frac{n}{x^2} \right)^j \exp \left( - \frac{j^2}{2n}\right) \right]. \label{ebm}
\end{eqnarray}
For the first sum, using monotone inequality, we obtain 
\begin{eqnarray}
\sum \limits_{j=0}^{[\sqrt{n}]} j^2  \left(\frac{n}{x^2} \right)^j \leq \frac{1}{[\sqrt{n}]+1} \sum \limits_{j=0}^{[\sqrt{n}]} j^2 \times \sum \limits_{j=0}^{[\sqrt{n}]}   \left(\frac{n}{x^2} \right)^j \leq n  \sum \limits_{j=0}^{[\sqrt{n}]}   \left(\frac{n}{x^2} \right)^j. \label{ebh}
\end{eqnarray}
On the other hand, using the fact that $x^2 \geq n$ and the integral approximation,
\begin{eqnarray}
&&\sum \limits_{j=[\sqrt{n}]}^n j^2  \left(\frac{n}{x^2} \right)^j \exp \left( - \frac{j^2}{2n}\right) \leq \left(\frac{n}{x^2} \right)^{[\sqrt{n}]} \sum \limits_{j=[\sqrt{n}]}^n j^2   \exp \left( - \frac{j^2}{2n}\right) \notag \\
& \leq & C n \sqrt{n} \left(\frac{n}{x^2} \right)^{[\sqrt{n}]}  \leq  C  n   \sum \limits_{j=0}^{[\sqrt{n}]}    \left(\frac{n}{x^2} \right)^j,  \label{ebb}
\end{eqnarray}
for some positive constant $C$. Combining \eqref{ebm}, \eqref{ebh} and \eqref{ebb},
\begin{eqnarray}
\sum \limits_{i=0}^n \left( \frac{i-n}{x} \right)^2 \frac{x^{2i}}{i!}  \leq   \sqrt{e}(1+C)   \frac{x^{2n}}{n!} \sum \limits_{j=0}^{[\sqrt{n}]}   \left(\frac{n}{x^2} \right)^j. \label{ebf}
\end{eqnarray}
Using \eqref{hb}, \eqref{hpb}, \eqref{lhb} and \eqref{ebf}, we have  
 \begin{eqnarray*}
  c \E(h_n'(x)^2) \leq \E(h_n(x)^2), 
 \end{eqnarray*}
 for some positive constant $c$.   Now, we can use the same arguments for \eqref{pbm} to handle with \eqref{pbh}. \hfill $\square$
 
 \section{Persistence probability on the whole real axis}\label{sec3}
 We start this section with an elementary lemma helping us  control the  autocorrelation function
 $$A_n(x,y)=corr(f_n(x),f_n(y))= \frac{\sum_{i=0}^{n}\frac{(xy)^i}{i!}}{\sqrt{\sum_{i=0}^{n}\frac{x^{2i}}{i!}} \times \sqrt{\sum_{i=0}^{n}\frac{y^{2i}}{i!}}}.$$
 \begin{lem} \label{lan}
  The following statements hold.
 \begin{itemize}
 \item[(i)] If $n$ is even, the autocorrelation $A_n(x,y)$ is nonnegative for all $x, y$. 
 \item[(ii)] Let $(\alpha_n)$ be a sequence satisfying $\alpha_n \rightarrow \infty$ and $\alpha_n = o(\sqrt{n})$. Then for all  $n$ large enough and  $0\leq x \leq n - \sqrt{n} \alpha_n$,
 \begin{equation*} \label{cc}
1 - e^{-\alpha_n^2/4} \leq e^{-x}\sum_{i=0}^n\frac{x^i}{i!} \leq 1.
\end{equation*}
\item[(iii)] Let $n$ be an even number. Assume that $(\alpha_n)$ is a sequence satisfying $\alpha_n \rightarrow \infty$ and $\alpha_n = o(\sqrt{n})$. Then for all for all  $n$ large enough and $0\leq x \leq n - \sqrt{n} \alpha_n$,
 \begin{equation*} \label{cc}
1 - e^{2x} \frac{x^{n+1}}{(n+1)!} \leq e^{x}\sum_{i=0}^n\frac{(-x)^i}{i!} \leq 1+ e^{2x} \frac{e^{-\alpha_n^2/4}}{\sqrt{2 \pi n}}.
\end{equation*}
 \end{itemize}
 \end{lem}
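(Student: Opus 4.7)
The plan is to treat the three parts in turn, each resting on a simple identity for truncated exponential series. For part (i), the denominator of $A_n(x,y)$ is strictly positive, so I only need $\sum_{i=0}^n (xy)^i/i! \geq 0$. The case $xy \geq 0$ is trivial, so writing $t = -xy > 0$ the task reduces to showing $S_n(t) := \sum_{i=0}^n (-t)^i/i! \geq 0$ whenever $n$ is even. Taylor's theorem with integral remainder gives
$$e^{-t} - S_n(t) = \frac{(-1)^{n+1}}{n!}\int_0^t (t-s)^n e^{-s}\,ds,$$
whose sign is that of $(-1)^{n+1}$; for $n$ even the right-hand side is $\leq 0$, hence $S_n(t) \geq e^{-t} > 0$.

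For part (ii), the upper bound is immediate because the partial sum is dominated by the full series $e^x$. For the lower bound I would recognize $e^{-x}\sum_{i=0}^n x^i/i! = \pp(Y \leq n)$ with $Y \sim \mathrm{Poisson}(x)$, so the claim reduces to $\pp(Y \geq n+1) \leq e^{-\alpha_n^2/4}$. The classical Chernoff--Bennett bound $\pp(Y \geq x+t) \leq \exp(-t^2/(2(x+t)))$, applied with $t = n+1 - x \geq \sqrt{n}\alpha_n$ and $x+t = n+1$, yields an exponent at least $n\alpha_n^2/(2(n+1)) \geq \alpha_n^2/4$ for every $n \geq 1$, which is exactly what is required.

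For part (iii), I start from the identity $e^x \sum_{i=0}^n (-x)^i/i! = 1 - e^x \sum_{i=n+1}^\infty (-x)^i/i!$. Since $x < n+1$, the sequence $x^i/i!$ is strictly decreasing for $i \geq n+1$, so Leibniz's alternating-series estimate gives $\bigl|\sum_{i=n+1}^\infty (-x)^i/i!\bigr| \leq x^{n+1}/(n+1)!$; moreover, because $n+1$ is odd the leading term $-x^{n+1}/(n+1)!$ is negative, so the tail itself is nonpositive. This immediately produces $e^x \sum_{i=0}^n (-x)^i/i! \in [\,1,\; 1 + e^x x^{n+1}/(n+1)!\,]$; the lower bound is even stronger than the one claimed. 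To cast the upper bound into the stated form, I would apply Stirling's inequality $(n+1)! \geq \sqrt{2\pi(n+1)}((n+1)/e)^{n+1}$ to bound $e^{-x} x^{n+1}/(n+1)!$: writing $\lambda = x/(n+1) \in (0,1]$ and combining the elementary estimate $\log\lambda + 1 - \lambda \leq -(1-\lambda)^2/2$ with $(1-\lambda)^2 \geq n\alpha_n^2/(n+1)^2$, the exponent drops below $-\alpha_n^2/4$ for all $n$ large, giving $e^{-x} x^{n+1}/(n+1)! \leq e^{-\alpha_n^2/4}/\sqrt{2\pi n}$, which is precisely the prefactor appearing in the lemma.

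I expect part (iii) to be the main obstacle, since it is the one step that genuinely combines three ingredients: the identity splitting off the "$1$", the alternating-series bound together with a careful sign check for $n$ even, and a quantitatively sharp Stirling estimate to produce the exact $e^{-\alpha_n^2/4}/\sqrt{2\pi n}$ factor. The other two parts are short once the sign analysis and the Poisson tail bound are in place.
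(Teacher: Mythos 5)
Your proposal is correct and follows the same overall plan as the paper — (i) is a sign check for the truncated exponential series at even order, (ii) is a Poisson upper-tail estimate for parameter $x \leq n - \sqrt{n}\alpha_n$, and (iii) bounds the tail of the truncated $e^{-x}$ series followed by a Stirling estimate — but at each step you pick a somewhat different elementary device. For (i), you use the integral-remainder form of Taylor's theorem to sign $e^{-t}-\sum_{i\leq n}(-t)^i/i!$; the paper instead notices that $q_n(x)=e^{-x}\sum_{i\leq n}x^i/i!$ has $q_n'(x)=-e^{-x}x^n/n!\leq 0$ for $n$ even, so $q_n(x)\geq q_n(0)=1$ for $x\leq 0$. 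These are the same fact in two dialects. For (ii), you apply the Chernoff--Bennett tail bound $\pp(Y\geq \lambda+t)\leq \exp\bigl(-t^2/(2(\lambda+t))\bigr)$ directly to $Y\sim\mathrm{Poisson}(x)$ with $t=n+1-x$; the paper instead monotonizes to $Poi(k)$ with $k=\lceil n-\sqrt n\,\alpha_n\rceil$, views it as a sum of i.i.d.\ $Poi(1)$ variables, and invokes Cram\'er's LDP with rate function $I(\theta)=\theta\log\theta-\theta+1$. Your route is more self-contained and yields the $1/4$ in the exponent for every $n\geq1$, whereas the LDP route carries an extra constant that must be absorbed for $n$ large. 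For (iii), the paper proves the two sides separately: the lower bound via a crude termwise bound $\bigl|\sum_{i>n}(-x)^i/i!\bigr|\leq e^x x^{n+1}/(n+1)!$ (hence the $e^{2x}$ prefactor), and the upper bound via the telescoping identity $\ell_n'(t)=e^t t^n/n!$, a Stirling estimate, and an integration. You instead use the alternating-series estimate once: because $n+1$ is odd and the terms $x^i/i!$ are decreasing for $i\geq n+1$ (as $x<n+1$), the tail lies in $[-x^{n+1}/(n+1)!,\,0]$, so the lower bound is in fact $\geq 1$ (strictly stronger than claimed) and the upper bound is the first-term bound, which your Stirling calculation correctly turns into the stated $e^{-\alpha_n^2/4}/\sqrt{2\pi n}$ prefactor. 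In short: same architecture, cleaner and slightly more elementary tools, and a marginally sharper lower bound in (iii).
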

\begin{proof} To prove (i), it suffices to show that if $n$ is even, then for all $x \in \R$,
\begin{equation} \label{tpi}
1+x+\frac{x^2}{2!}+\ldots+\frac{x^n}{n!} \geq 0.
\end{equation}
It is clear that \eqref{tpi} holds for $x \geq 0$. For  $x\leq 0$, we consider 
$$q_n(x)=e^{-x} \sum \limits_{i=0}^n \frac{x^i}{i!}.$$
Since $n$ is even, 
$$q_n'(x) = - e^{-x} \frac{x^n}{n!} \leq 0.$$
Thus the function $g_n(x)$ is decreasing, so  $g_n(x) \geq g_n(0)=1$ for all $x \leq 0$. Therefore, \eqref{tpi} holds for $x\leq 0$.

We now prove (ii). The upper bound is trivial, so we only need to show the lower bound. We observe that  for all $0 \leq x \leq n-\sqrt{n}\alpha_n$,
\begin{equation} \label{poi}
1-e^{-x}\sum_{i=0}^n\frac{x^i}{i!}=\pp(Poi(x)>n)\leq \pp(Poi(k)>n),
\end{equation}
where $Poi(\lambda)$ stands for the Poisson distribution with intensity $\lambda$ and 
$$k=\lceil n-\sqrt{n}\alpha_n \rceil. $$ 
Let  $(X_i) $ be a sequence of i.i.d random variables with  Poisson distribution of density $1$. Then by large deviation principles (see e.g. \cite{DO}),
\begin{equation} \label{pos}
\pp(Poi(k)>n)=\pp \left(\sum_{i=1}^kX_i >n \right)=\pp \left(\frac{\sum_{i=1}^kX_i}{k} >\frac{n}{k} \right)\leq Ce^{-kI(n/k)},
\end{equation}
where $C$ is some positive constant and $I(\cdot)$ is the rate function of $Poi(1)$ defined as
$$I(\theta)=\theta\log \theta -(\theta-1).$$
We have
$$ I\left( \frac{n}{k}\right)= \frac{n}{k}\log\left( \frac{n}{k}\right)-\left( \frac{n}{k}-1\right)\geq \frac{n}{k} \left[ \left( \frac{n}{k}-1\right) -\frac{1}{2}\left( \frac{n}{k}-1\right)^2\right] -\left( \frac{n}{k}-1\right)
\geq \frac{1}{3}\left( \frac{n}{k}-1\right)^2.
$$
Therefore, \begin{equation} \label{pox}
 k I\left( \frac{n}{k}\right) \geq \frac{k}{3}\left( \frac{n}{k}-1\right)^2\geq \frac{\alpha_n^2}{3}.
\end{equation} 
Combining \eqref{poi}, \eqref{pos} and \eqref{pox}, we get
$$ 1- e^{-x}\sum_{i=0}^n\frac{x^i}{i!} \leq Ce^{-kI(n/k)} \leq e^{-\alpha_n^2/4},$$
for $n$ large enough.

The lower bound in (iii) follows from the following estimate  
\begin{align*}
\left| 1-e^{x }\sum_{i=0}^n \frac{(-x)^i}{i!}\right|& =  \left|e^{x }\sum_{i=n+1}^{\infty} \frac{(-x)^i}{i!}\right| \leq \frac{x^{n+1}}{(n+1)!} e^{x}\sum_{i=0}^{\infty}\frac{x^i(n+1)!}{(n+1+i)!} \leq \frac{x^{n+1}}{(n+1)!} e^{2x}.
\end{align*}
To prove the upper bound, we define 
$$\ell_n(t)=e^t \sum_{i=0}^n\frac{(-t)^i}{i!}.$$
Since $n$ is even, we have $$\ell'_n(t)= \frac{e^t t^n}{n!}$$ 
By Stirling formula,
\begin{align*}
\displaystyle \frac{e^t t^n}{n!} &\displaystyle  \leq  \frac{e^{t+n}}{\sqrt{2\pi n}} \left( \frac{t}{n}\right)^n = \frac{e^{t+n}}{\sqrt{2\pi n}} \exp\left[n\log \left( 1-\frac{n-t}{n}\right)\right]\\
& \displaystyle\leq  \frac{e^{t+n}}{\sqrt{2\pi n}} \exp \left[ n\left( \frac{t-n}{n}-\frac{(t-n)^2}{4n^2}\right)\right]  \leq   \frac{e^{2t}}{\sqrt{2\pi n}}e^{-\alpha_n^2/4},
\end{align*}
for  $t\leq n- \alpha_n \sqrt{n}$. Therefore, 
$$\ell_n(x)=1+\int_0^x \frac{e^t t^n}{n!} dt \leq  1+ \frac{e^{-\alpha_n^2/4}}{\sqrt{2\pi n}}\int_0^x  e^{2t} dt \leq  1+ e^{2x} \frac{e^{-\alpha_n^2/4}}{\sqrt{2\pi n}},$$
thus (iii) follows.
\end{proof} 
The part (i) of Lemma \ref{lan} guarantees that the autocorelation function $A_n(x,y)$ of Weyl random polynomials of even degree is always nonnegative. Therefore, we can apply Lemma \ref{lem1} and Slepian inequality. While the part (ii) leads to a tight estimate on $A_n(x,y)$, the part (iii)  only gives us rough estimates. In Lemma \ref{lemcse3} below, we will use these estimates to verify the conditions of Lemma \ref{lem1} and   deduce the persistence exponent.    
\begin{lem}\label{lemcse3} Given that $n$ is even. Then as $n\rightarrow\infty$,
\begin{equation*}
\underset{n \rightarrow \infty}{\lim} \frac{1}{\sqrt{n}} \log \pp \left(f_n(x)>0,\, \forall x\in [-\sqrt{n}+\alpha_n,\sqrt{n}-\alpha_n] \right)=-2b.
\end{equation*}
\end{lem}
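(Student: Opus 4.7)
The plan is to apply Lemma~\ref{lem1} to a shifted, normalized version of the Weyl polynomial on the symmetric interval. Specifically, let $\sigma_n^2(x)=\sum_{i=0}^n x^{2i}/i!$ and define the unit-variance Gaussian process $Y_t^{(n)}:=f_n(t-\sqrt{n}+\alpha_n)/\sigma_n(t-\sqrt{n}+\alpha_n)$ for $t\in[0,T_n]$ with $T_n:=2(\sqrt{n}-\alpha_n)$. Since $Y_t^{(n)}$ and $f_n(t-\sqrt{n}+\alpha_n)$ share the same sign, it suffices to prove
\[\lim_{n\to\infty}\frac{1}{T_n}\log\pp\bigl(Y_t^{(n)}>0,\,\forall t\in[0,T_n]\bigr)=-b,\]
from which Lemma~\ref{lemcse3} follows upon multiplication by $T_n/\sqrt{n}\to 2$. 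The parity of $n$ is used at exactly one place: Lemma~\ref{lan}(i) guarantees that the autocorrelation $A_n(x,y)$ is nonnegative on all of $\R^2$, which is the standing assumption of Lemma~\ref{lem1}.

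The next step is to verify conditions (b1) and (b2) of Lemma~\ref{lem1} with limiting autocorrelation $A_\infty(0,\tau)=e^{-\tau^2/2}$. For (b2) with $\tau\in[0,M]$, I would use parts (ii) and (iii) of Lemma~\ref{lan} to show that
\[A_n(x,x+\tau)=e^{-\tau^2/2}\bigl(1+O(e^{-\alpha_n^2/4})\bigr)\]
uniformly in admissible $x$: if $x(x+\tau)\geq 0$ then $|x(x+\tau)|\leq(\sqrt{n}-\alpha_n)^2\leq n-\sqrt{n}\alpha_n$ and all three exponential sums making up $A_n$ are controlled by Lemma~\ref{lan}(ii); if $x<0<x+\tau$, then $|x(x+\tau)|\leq\tau^2/4\leq M^2/4$, so the error in Lemma~\ref{lan}(iii) is super-exponentially small in $n$. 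For (b1), the same case split applies. When $x(x+\tau)\geq 0$ the same bound gives $A_n(x,x+\tau)\leq Ce^{-\tau^2/2}$. When $x<0<x+\tau$ with $|xy|$ possibly of order $n$, the upper bound of Lemma~\ref{lan}(iii) combined with the identity $xy-(x^2+y^2)/2=-(x-y)^2/2$ and the lower bound on the denominator yields
\[A_n(x,x+\tau)\leq C\bigl(e^{-\tau^2/2}+e^{-\alpha_n^2/4}\bigr).\]
Since $\alpha_n^2=n/\log^2 n$ while $\log\tau\leq\log T_n\leq\log(2\sqrt{n})$ for any admissible $\tau$, the right-hand side is smaller than any fixed power $\tau^{-c}$ for $n$ large, which establishes (b1).

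With (b1) and (b2) in hand, Lemma~\ref{lem1} delivers the persistence exponent $-b$ for $Y^{(n)}$ on $[0,T_n]$, and the scaling $T_n/\sqrt{n}\to 2$ then gives the statement of Lemma~\ref{lemcse3}. I expect the most delicate point to be the uniform verification of (b1) in the regime $x<0<x+\tau$ with $|xy|$ comparable to $n$, where $\sum_{i=0}^n (xy)^i/i!$ is no longer close to $e^{xy}$. The estimate of Lemma~\ref{lan}(iii) carries a crucial $e^{2|xy|}$ prefactor, and getting this absorbed by the Gaussian decay of $\sqrt{\sum x^{2i}/i!}\sqrt{\sum y^{2i}/i!}$ via the identity $|xy|-(x^2+y^2)/2=-(x+y)^2/2$, uniformly in $s$ all the way to the boundary of the interval, is the main source of bookkeeping in the argument.
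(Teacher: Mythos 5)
Your overall setup — reparametrize the Weyl polynomial on $[-\sqrt{n}+\alpha_n,\sqrt{n}-\alpha_n]$ as a unit-variance Gaussian process of length $T_n=2(\sqrt{n}-\alpha_n)$, invoke Lemma~\ref{lem1} with $A_\infty(0,\tau)=e^{-\tau^2/2}$, use Lemma~\ref{lan}(i) to supply the nonnegativity required for even $n$, and conclude by the scaling $T_n/\sqrt{n}\to 2$ — matches the paper exactly. Your verification of (b1) is also essentially the paper's: the same case split on the sign of $s(s+\tau)$, Lemma~\ref{lan}(ii) for the like-sign case giving $A_n\leq 4e^{-\tau^2/2}$, and Lemma~\ref{lan}(iii) for the mixed-sign case, where the $e^{2a}$ prefactor with $a=|s(s+\tau)|\leq\tau^2/4$ is absorbed (either via $e^{2a}\leq e^{\tau^2/2}$ as the paper does, or via the identity $|xy|-\tfrac12(x^2+y^2)=-\tfrac12(x+y)^2$ as you write — these are algebraically equivalent), leaving a correction of order $e^{-\alpha_n^2/4}$ which is negligible compared to any power of $\tau\leq 2\sqrt{n}$.

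The genuine divergence from the paper is your choice to verify condition (b2). The paper explicitly avoids (b2) — citing the failure of the uniform-convergence mechanism when $xy<0$ — and instead verifies (b3), i.e.\ pointwise convergence plus the small-oscillation bound $\limsup_{u\downarrow0}|\log u|^\eta\sup_k p_k^2(u)<\infty$, which it establishes through the three-regime analysis of Lemmas~\ref{lemc31}--\ref{lemc33}. Your sketch of (b2) stops at the assertion $A_n(x,x+\tau)=e^{-\tau^2/2}(1+O(e^{-\alpha_n^2/4}))$ uniformly. That multiplicative statement is plausible (and your case split $x(x+\tau)\geq 0$ versus $x<0<x+\tau$ with $|x(x+\tau)|\leq M^2/4$ is the right way to get it), but it is not by itself sufficient for the (b2) sandwich. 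Rearranging the upper bound of (b2) gives $A_n-e^{-\tau^2/2}\leq\epsilon_n\bigl(1-e^{-\tau^2/2}\bigr)$; as $\tau\downarrow0$ the right-hand side vanishes like $\epsilon_n\tau^2/2$, whereas your uniform estimate only controls the left-hand side by $\delta_n e^{-\tau^2/2}$, which does not vanish with $\tau$. The inequality is saved only because $A_n(s,s)=1$ exactly forces the true error to vanish at $\tau=0$, but that extra structure is precisely what must be extracted and quantified — it is not contained in a blanket $O(e^{-\alpha_n^2/4})$ bound. This is the delicacy the paper's (b3) route is designed to circumvent: $p_k^2(u)$ measures exactly the small-$\tau$ oscillation, and Lemmas~\ref{lemc31}--\ref{lemc33} give the required $|\log u|^{-2}$ control directly, with separate treatment of small $n$ (Lemma~\ref{lemc32}) and large $n$ (Lemma~\ref{lemc33}) as well as the mixed-sign regime (Lemma~\ref{lemc31}). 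So the structure of your argument is correct and (b1) is fine, but the (b2) step as stated leaves a real gap near $\tau=0$ that you would need to close — or, more simply, switch to (b3) as the paper does.
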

\noindent {\bf Proof of Part (b) of Theorem \ref{mth}}. Using  Lemma \ref{lemcse3} and the same arguments for \eqref{uop}, we immediately get the upper bound 
\begin{equation} \label{uopr}
\underset{n \rightarrow \infty}{\lim\sup} \, \, \frac{1}{\sqrt{n}} \log \pp \left(f_n(x)>0,\, \forall x\in  \R \right)\leq -2b.
\end{equation}
On the other hand, by analogous arguments for \eqref{lops}, the lower bound 
\begin{equation} \label{lopr}
\underset{n \rightarrow \infty}{\lim\inf} \, \, \frac{1}{\sqrt{n}} \log \pp \left(f_n(x)>0,\, \forall x\in  \R \right)\geq -2b.
\end{equation}
follows from  Lemma \ref{lemcse3} and the following claims
\begin{itemize}
\item[(d1)]
\begin{equation*} 
\underset{n \rightarrow \infty}{\lim\inf} \, \, \frac{1}{\sqrt{n}} \log \pp \left(f_n(x)>0,\, \forall x\in  (-\infty,-\sqrt{n}+\alpha_n]\right) \geq 0,
\end{equation*}
\item[(d2)]
\begin{equation*} 
\underset{n \rightarrow \infty}{\lim\inf} \, \, \frac{1}{\sqrt{n}} \log \pp \left(f_n(x)>0,\, \forall x\in   [\sqrt{n} -\alpha_n, \infty)\right) \geq 0.
\end{equation*}
\end{itemize}
By symmetry, the law of the Weyl random polynomial on the interval $(-\infty,-\sqrt{n}+\alpha_n]$ is as on the interval $ [\sqrt{n}-\alpha_n, \infty)$. Thus the claim (d1) is equivalent to the claim (d2). The claim (d2) follows from the claims (c2) and (c3) by using Slepian inequality. Now, it remains to show Lemma \ref{lemcse3}.  \hfill $\square$
 
\vspace{0.2 cm}
 The strategy of the proof of Lemma \ref{lemcse3} is to verify the conditions on the correllation function given  in Lemma \ref{lem1}. We notice that to prove Lemma \ref{lprp} (which deals with  the persistence  probability in $\R_+$), Dembo and Mukherjee verify conditions (b1) and (b2).  In particular, to check (b2), they use the  estimate in Lemma \ref{lan} (ii) to show that the autocorrellation $A_n(x,y)$ converges uniformly to $e^{-(x-y)^2}$ for $x,y \in [0, \sqrt{n}-\alpha_n]$. However, this convergence  does not hold for $xy<0$. Thus in our proof, we will use the condition  (b3) instead of (b2).

For any given even number $n$, we consider the Weyl polynomial as a Gaussian process on the interval $[-\sqrt{n}+\alpha_n,\sqrt{n}-\alpha_n] $ of size $2(\sqrt{n}-\alpha_n)$. To recover the exact form in the statement of Lemma \ref{lem1}, one should make the change of variable to transform the interval  $[-\sqrt{n}+\alpha_n,\sqrt{n}-\alpha_n]$ into the interval $[0,2(\sqrt{n}-\alpha_n)]$. However, for convenience in the sequel, we keep the interval $[-\sqrt{n}+\alpha_n,\sqrt{n}-\alpha_n] $.

\vspace{0.2 cm}
\noindent {\bf Proof of Lemma \ref{lemcse3}}.  We now verify two conditions (b1) and (b3).

\vspace{0.2 cm}
\noindent { \it Verification of (b1)}.  We first observe that  by Lemma \ref{lan} (ii),  if $|s|, |s+ \tau| \leq \sqrt{n} - \alpha_n$,
\begin{equation}\label{lane}
A_n(s,s+\tau) \leq \frac{ 4 \sum_{i=0}^n\frac{(s(s+\tau))^i}{i!}}{\sqrt{e^{s^2}}\sqrt{e^{(s+\tau)^2}}}.
\end{equation}
In case when $s(s+\tau)\geq 0$,  we have 
$$A_n(s,s+\tau)\leq \frac{4 e^{s(s+ \tau)}}{\sqrt{e^{s^2}}\sqrt{e^{(s+\tau)^2}}}=  4e^{-\tau^2/2},$$
and thus  (b1) is verified. The next lemma deals with the remaining case $s(s+\tau)<0$.
\begin{lem} Given that $n$ is even. For all $n$ and $\tau$ large enough,   if $s(s+\tau)<0$ and $0<|s|,|s+\tau|<\sqrt{n}-\alpha_n$ then 
$$A_n(s,s+\tau) \leq  \tau^{-2}.$$
In particular, the condition (b1) is verified.
\end{lem}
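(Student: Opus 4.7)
When $s(s+\tau)<0$ (so necessarily $s<0<s+\tau$), set $a:=-s>0$, $b:=s+\tau>0$, $\tau=a+b$, and $u:=ab=-s(s+\tau)>0$. The assumption $|s|,|s+\tau|\leq\sqrt{n}-\alpha_n$ yields $u\leq(\sqrt{n}-\alpha_n)^2\leq n-\sqrt{n}\alpha_n$ for $n$ large, so Lemma \ref{lan}(iii) applies at $x=u$ and gives
$$\sum_{i=0}^n\frac{(s(s+\tau))^i}{i!}=\sum_{i=0}^n\frac{(-u)^i}{i!}\leq e^{-u}+\frac{e^u\,e^{-\alpha_n^2/4}}{\sqrt{2\pi n}}.$$
Plugging this into \eqref{lane} together with the elementary identity
$$\tfrac{1}{2}\bigl(s^2+(s+\tau)^2\bigr)=s(s+\tau)+\tfrac{\tau^2}{2}=-u+\tfrac{\tau^2}{2}$$
yields
$$A_n(s,s+\tau)\leq 4e^{-\tau^2/2}+\frac{4\,e^{2u-\tau^2/2}\,e^{-\alpha_n^2/4}}{\sqrt{2\pi n}}.$$

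The decisive observation is that AM-GM forces $u=ab\leq(a+b)^2/4=\tau^2/4$, so $2u-\tau^2/2\leq 0$ and the residual term is bounded uniformly in $s$ by $4e^{-\alpha_n^2/4}/\sqrt{2\pi n}$. Since any admissible $\tau$ satisfies $\tau\leq 2\sqrt{n}$ and $e^{-\alpha_n^2/4}$ decays super-polynomially in $n$ (recall $\alpha_n=\sqrt{n}/\log n$), for $n$ and $\tau$ large enough each term is at most $\tau^{-2}/2$, giving $A_n(s,s+\tau)\leq\tau^{-2}$. Combining with the bound $A_n(s,s+\tau)\leq 4e^{-\tau^2/2}\leq\tau^{-2}$ already established for the case $s(s+\tau)\geq 0$, we obtain $A_n(s,s+\tau)\leq\tau^{-2}$ uniformly for admissible $s$ and all $n,\tau$ large, so $\limsup \log A_n(s,s+\tau)/\log\tau\leq -2<-1$, verifying (b1).

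The only conceptual step is the realization that the rogue $e^u$ factor coming from Lemma \ref{lan}(iii) is exactly offset by the $e^{-u}$ hidden in $e^{(s^2+(s+\tau)^2)/2}=e^{-u+\tau^2/2}$ via AM-GM; everything else is arithmetic, and the quantitative bounds on $\alpha_n$ are generous enough to absorb the $\tau^{-2}$ demand across the entire admissible range of $\tau$.
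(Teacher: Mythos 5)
Your proof is correct and follows essentially the same approach as the paper: both use Lemma \ref{lane}(ii) to bound the denominator, Lemma \ref{lan}(iii) to control the alternating sum $\sum_{i\le n}(-u)^i/i!$, and the AM-GM observation $u=-s(s+\tau)\le\tau^2/4$ to neutralize the potentially dangerous $e^{2u}$ factor against $e^{-\tau^2/2}$, finishing with the super-polynomial decay of $e^{-\alpha_n^2/4}$ over the admissible range $\tau\le 2\sqrt{n}$. The only differences are notational (the paper writes $a$ for your $u$ and keeps $4e^{-\tau^2/2}$ factored rather than distributed), so nothing substantive distinguishes the two arguments.
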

\begin{proof} Let us define $a=-s(s+\tau)$. Then by Lemma \ref{lane} (ii),
\begin{equation*}\label{lane}
A_n(s,s+\tau) \leq \frac{ 4 \sum_{i=0}^n\frac{(-a)^i}{i!}}{\sqrt{e^{s^2}}\sqrt{e^{(s+\tau)^2}}} =  \frac{ 4 e^a\sum_{i=0}^n\frac{(-a)^i}{i!}}{e^{\tau^2/2}}.
\end{equation*}
 By  assumptions of the lemma,  $\alpha_n - \sqrt{n} \leq s <0< \tau \leq 2( \sqrt{n} - \alpha_n)$. Thus 
$$0<a = (\tau - |s|)|s|\leq \tau^2/4 \leq  n-\sqrt{n}\alpha_n.$$ 
Therefore, by Lemma \ref{lan} (iii), for $n$ and $\tau$ large enough
\begin{eqnarray*}
A_n(s,s+\tau)\leq 4 e^{-\tau^2/2} \left(1+ e^{2a}  \frac{e^{-\alpha_n^2/4}}{\sqrt{2\pi n}} \right)\leq 4 e^{-\tau^2/2} \left(1+ e^{\tau^2/2}  \frac{e^{-\alpha_n^2/4}}{\sqrt{2\pi n}} \right) \leq  \tau^{-2}, 
\end{eqnarray*}
since $0< \tau \leq 2( \sqrt{n} - \alpha_n).$
\end{proof}
 
 \vspace{0.2 cm}
 {\it Verification of (b3)}. To show (b3), it suffices to prove that for all $u$ small enough 
\begin{equation} \label{cb3}
\sup \limits_{n \in 2 \N} \left(1-\inf_{\substack{st<0, \\ 0\leq t-s\leq u }}A_n(s,t) \right) \leq \frac{1}{|\log u|^2}.
\end{equation}
We will verify \eqref{cb3} in three regimes $st<0$; $st>0$ with $n\leq \sqrt{|\log u|}$; and $st>0$ with $n\geq \sqrt{|\log u|}$ by three Lemmas \ref{lemc31}, \ref{lemc32} and \ref{lemc33} respectively. 
 \begin{lem}\label{lemc31} If $st<0$ and $0\leq t-s\leq u$ then  for all $u$ small enough,
$$1-\inf_{\substack{st<0, \\ 0\leq t-s\leq u }}A_n(s,t) \leq u^2.$$
\end{lem}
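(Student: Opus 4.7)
The plan is to exploit that the hypotheses $st<0$ and $t-s\leq u$ confine both $s$ and $t$ to the small window $[-u,u]$, so that only the low-order terms of the series defining $A_n(s,t)$ really matter, and a Taylor-type expansion around $0$ yields the $u^2$ bound uniformly in $n$. Indeed, taking without loss of generality $s<0<t$, we have $|s|+|t|=t-s\leq u$, hence $\max(|s|,|t|)\leq u$.

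First I would write $A_n(s,t)=N/\sqrt{D_s D_t}$ with $N=\sum_{i=0}^n (st)^i/i!$ and $D_x=\sum_{i=0}^n x^{2i}/i!$. Since $D_s,D_t\geq 1$ (from the $i=0$ terms) and $N\leq\sqrt{D_s D_t}$ by Cauchy--Schwarz,
\[
1-A_n(s,t)=\frac{\sqrt{D_s D_t}-N}{\sqrt{D_s D_t}}\leq \sqrt{D_s D_t}-N\leq \frac{D_s+D_t}{2}-N
\]
by AM--GM. The algebraic identity $s^{2i}+t^{2i}-2(st)^i=(s^i-t^i)^2$ now recasts the right-hand side as $\tfrac{1}{2}\sum_{i=1}^n (s^i-t^i)^2/i!$, which is manifestly nonnegative and amenable to termwise estimates.

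Next, using the factorization $s^i-t^i=(s-t)(s^{i-1}+s^{i-2}t+\cdots+t^{i-1})$ together with $\max(|s|,|t|)\leq u$, each summand of the $i$-th bracket has absolute value at most $u^{i-1}$, so $|s^i-t^i|\leq |s-t|\cdot i\cdot u^{i-1}\leq i u^i$. Substituting and summing,
\[
1-A_n(s,t)\leq \frac{1}{2}\sum_{i\geq 1}\frac{i^2 u^{2i}}{i!}=\frac{u^2}{2}\sum_{j\geq 0}\frac{(j+1)u^{2j}}{j!}=\frac{u^2}{2}(1+u^2)e^{u^2},
\]
which is at most $u^2$ for all $u$ sufficiently small, uniformly in $n$. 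This supplies the required bound.

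I do not anticipate a serious obstacle: the sign hypothesis $st<0$ serves only to force $\max(|s|,|t|)\leq u$ via $|s|+|t|=t-s$, and once this is in hand, the identity $s^{2i}+t^{2i}-2(st)^i=(s^i-t^i)^2$ makes the nonnegative quantity $\sqrt{D_sD_t}-N$ accessible term by term, and summing the resulting majorant against $1/i!$ collapses everything to a Taylor expression of order $u^2$ with uniform-in-$n$ constants, which is precisely what is needed downstream for the verification of condition (b3).
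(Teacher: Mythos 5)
Your proof is correct, and it takes a genuinely different route from the paper's. The paper first bounds the denominator by $\sqrt{\sum_i s^{2i}/i!}\,\sqrt{\sum_i t^{2i}/i!} \le e^{(s^2+t^2)/2}$, then invokes Lemma 4.1(iii) (a Taylor-remainder estimate for $e^x\sum_{i\le n}(-x)^i/i!$, proved via Stirling) to bound the numerator $\sum_i(st)^i/i!$ from below by $e^{st}\bigl(1-\tfrac{u^{2(n+1)}}{(n+1)!}e^{2u^2}\bigr)$, and combines these using the identity $st-(s^2+t^2)/2=-(t-s)^2/2$ to get $A_n(s,t)\ge e^{-u^2/2}(1-\text{small})$. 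Your argument instead is entirely elementary and self-contained: Cauchy--Schwarz gives $N\le\sqrt{D_sD_t}$, AM--GM reduces $\sqrt{D_sD_t}-N$ to $\tfrac12\sum_{i\ge1}(s^i-t^i)^2/i!$, and the factorization $s^i-t^i=(s-t)\sum_{k}s^{i-1-k}t^k$ together with $\max(|s|,|t|)\le u$ (which indeed follows from $|s|+|t|=t-s\le u$ once one notes that $st<0$ and $t\ge s$ force $s<0<t$) yields the termwise bound $|s^i-t^i|\le i u^i$ and hence the clean majorant $\tfrac{u^2}{2}(1+u^2)e^{u^2}\le u^2$. This sidesteps Lemma 4.1(iii) entirely, avoids the (unneeded here) restriction to even $n$, and gives a bound uniform in $n$ by a pure power-series computation — a clean alternative, though the paper's route is natural in context since Lemma 4.1(iii) is developed and reused for the other regimes of the (b3) verification.
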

\begin{proof}
By assumptions, we have  $-u\leq s <0$ and $0<t\leq u$, so $-u^2\leq st<0$. Hence, by Lemma \ref{lan} (iii)
\begin{align*}
e^{-st }\sum_{i=0}^n \frac{(st)^i}{i!} & \geq 1 - \frac{|st|^{n+1}}{(n+1)!} e^{-2st} \geq  1 - \frac{u^{2(n+1)}}{(n+1)!}e^{2u^2}.
\end{align*}
Thus
$$\sum_{i=0}^n \frac{(st)^i}{i!}\geq e^{st}\left(1-\frac{u^{2(n+1)}}{(n+1)!}e^{2u^2}\right).$$
Therefore,
\begin{align*}
A_n(s,t)&=\frac{\sum_{i=0}^n \frac{(st)^i}{i!}}{\sqrt{\sum_{i=0}^n \frac{s^{2i}}{i!}}\sqrt{\sum_{i=0}^n \frac{t^{2i}}{i!}}} \\
& \geq e^{-(t-s)^2/2}\left(1-\frac{u^{2(n+1)}}{(n+1)!}e^{2u^2}\right)\geq e^{-u^2/2}\left(1-\frac{u^{2(n+1)}}{(n+1)!}e^{2u^2}\right),
\end{align*}
where for the last inequality we used that $0\leq t-s\leq u$. In conclusion,
$$1-\inf_{\substack{st<0, \\ 0\leq t-s\leq u }}A_n(s,t) \leq 1- e^{-u^2/2} +\frac{u^{2(n+1)}}{(n+1)!}e^{u^2} \leq  u^2, $$
for all $u$ small enough.
\end{proof}
 
\begin{lem}\label{lemc32} For all  $u$ small enough and $n\leq \sqrt{|\log u|}$, we have
$$1-  \inf_{\substack{ 0\leq t-s\leq u,\\0 \leq s, t \leq  \sqrt{n}-\alpha_n }}  A_n(s,t) \leq u. $$
\end{lem}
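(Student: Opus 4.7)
Let $P_n(y):=\sum_{i=0}^n y^i/i!$, so that $A_n(s,t)=P_n(st)/\sqrt{P_n(s^2)P_n(t^2)}$. In the regime $0\le s,t\le \sqrt{n}-\alpha_n$ all quantities $s,t,st$ are nonnegative; in particular $P_n(s^2),P_n(t^2)\ge 1$ and $A_n(s,t)\in[0,1]$ by Cauchy--Schwarz. Since $1-A_n\le 1-A_n^2$ on $[0,1]$, it suffices to bound the numerator of
$$1-A_n(s,t)^2=\frac{P_n(s^2)P_n(t^2)-P_n(st)^2}{P_n(s^2)P_n(t^2)}$$
by $u$, because the denominator is $\ge 1$.

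The main algebraic step is the symmetrization identity
$$P_n(s^2)P_n(t^2)-P_n(st)^2=\frac12\sum_{i,j=0}^n\frac{(s^it^j-s^jt^i)^2}{i!\,j!},$$
obtained by expanding both products into double sums and pairing the indices $(i,j)$ with $(j,i)$. For $i\ge j$ I factor $s^it^j-s^jt^i=(st)^j(s^{i-j}-t^{i-j})$ and apply the mean value theorem to the second factor, using $\max(s,t)\le\sqrt{n}$ and $|t-s|\le u$, to obtain
$$|s^it^j-s^jt^i|\le |i-j|\,n^{(i+j-1)/2}\,u.$$
Substituting this, then dropping the restriction $i,j\le n$ in the finite sum (legitimate since all summands are nonnegative) and invoking the Poisson identity $\sum_{i,j\ge 0}(i-j)^2 n^{i+j}/(i!\,j!)=2ne^{2n}$, which equals $e^{2n}\,\E[(X-Y)^2]=e^{2n}\cdot 2n$ for independent $X,Y\sim\mathrm{Poi}(n)$, I obtain
$$P_n(s^2)P_n(t^2)-P_n(st)^2\le \frac{u^2}{2n}\cdot 2ne^{2n}=u^2 e^{2n}.$$

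Combining with the reduction of the first paragraph gives $1-A_n(s,t)\le u^2e^{2n}$. When $n\le\sqrt{|\log u|}$ this is bounded by $e^{-2|\log u|+2\sqrt{|\log u|}}$, which is smaller than $u$ for every $u$ sufficiently small, completing the proof. The only mildly nonroutine step is spotting the symmetrization identity; everything else is elementary Poisson--combinatorial bookkeeping. The hypothesis $n\le\sqrt{|\log u|}$ is in fact very generous here (any $n\le |\log u|/3$ would suffice), so its precise form is presumably dictated not by this lemma but by the complementary regime treated in Lemma \ref{lemc33}.
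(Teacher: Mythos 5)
Your proof is correct, and the overall route is the same as the paper's: reduce $1-A_n$ to the difference $P_n(s^2)P_n(t^2)-P_n(st)^2$, expand this via the Lagrange-type symmetrization identity, bound the summands using $|t-s|\le u$ and $s,t\le\sqrt n$, and finally invoke $n\le\sqrt{|\log u|}$ to absorb the $n$-dependent factor into $u$. The difference is purely in how crude the intermediate bound is. You bound $|s^it^j-s^jt^i|\le |i-j|\,n^{(i+j-1)/2}u$ and sum exactly via the Poisson second-moment identity $\sum_{i,j\ge 0}(i-j)^2 n^{i+j}/(i!j!)=2n e^{2n}$, yielding the clean estimate $1-A_n\le u^2 e^{2n}$. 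The paper instead bounds every factor crudely ($(st)^{2i}\le n^{2n}$, $t^{j-i-1}\le n^{(n-1)/2}$, $j-i\le n$, at most $n^2$ pairs) to get $n^{3n+3}u^2$, which is far larger but still $\le u$ once $n\le\sqrt{|\log u|}$ and $u$ is small. Your version is tighter, a bit slicker, and makes transparent that only $n\lesssim |\log u|$ is actually needed — exactly the observation you make at the end. One small technical remark: you pass through $1-A_n\le 1-A_n^2$ (valid since $A_n\in[0,1]$ here) and then drop the denominator $P_n(s^2)P_n(t^2)\ge 1$, whereas the paper multiplies by the conjugate $\sqrt{P_n(s^2)P_n(t^2)}+P_n(st)\ge 1$; both reductions are correct and equivalent in spirit.
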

\begin{proof} %Since
%$$A_n(s,t)=\frac{\sum_{i=0}^n\frac{(st)^i}{i!}}{\sqrt{\sum_{i=0}^n\frac{s^{2i}}{i!}}\sqrt{\sum_{i=0}^n\frac{t^{2i}}{i!}}},$$
We have
\begin{align*}
 1-A_n(s,t) & =\displaystyle \frac{\sqrt{\sum_{i=0}^n\frac{s^{2i}}{i!}}\sqrt{\sum_{i=0}^n\frac{t^{2i}}{i!}} -\sum_{i=0}^n\frac{(st)^i}{i!}}{\sqrt{\sum_{i=0}^n\frac{s^{2i}}{i!}}\sqrt{\sum_{i=0}^n\frac{t^{2i}}{i!}}}\\
& \leq \left(\sum_{i=0}^n\frac{s^{2i}}{i!}\right)\left(\sum_{i=0}^n\frac{t^{2i}}{i!}\right) -\left(\sum_{i=0}^n\frac{(st)^i}{i!}\right)^2\\
& = \sum_{1 \leq i<j\leq n} \left(\frac{s^it^j}{i!j!}-\frac{s^jt^i}{i!j!}\right)^2\\
&\leq \sum_{ 1 \leq i<j\leq n}  (st)^{2i}\left(t^{j-i}-s^{j-i}\right)^2\\
& \leq \sum_{ 1 \leq i<j\leq n} (st)^{2i} \left[t^{j-i-1}(j-i)(t-s)\right]^2\\
& \leq (t-s)^2 \sum_{1 \leq i<j\leq n} n^{2i} (n^{(n-1)/2}n)^2 \leq n^{3n+3}(t-s)^2.
\end{align*}
Therefore, foll all $n \leq \sqrt{|\log u|}$ and $u$ small enough,
\begin{align*}
1- \inf_{\substack{ 0\leq t-s\leq u,\\0 \leq s, t \leq  \sqrt{n}-\alpha_n }}A_n(s,t) & \leq n^{3n+3}u^2 \leq \exp\left( 2\sqrt{|\log u|}\log|\log u|-2|\log u|\right)\leq u.
\end{align*}
\end{proof}

\begin{lem}\label{lemc33} For $u$ small enough and  $n\geq \sqrt{|\log u|}$, we have  
$$1-  \inf_{\substack{ 0\leq t-s\leq u,\\0 \leq s, t \leq  \sqrt{n}-\alpha_n }} A_n(s,t)\leq \frac{1}{|\log u|^2}.$$
\end{lem}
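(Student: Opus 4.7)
The plan is to bound $A_n(s,t)$ from below by reducing all three partial exponential sums appearing in its definition to their limits via Lemma \ref{lan}(ii). Since $0 \le s, t \le \sqrt{n} - \alpha_n$ with $\alpha_n = \sqrt{n}/\log n = o(\sqrt{n})$, each of $s^2$, $t^2$, $st$ lies in $[0, (\sqrt{n}-\alpha_n)^2] \subset [0, n - \sqrt{n}\,\alpha_n]$ for $n$ large (since $\alpha_n \le \sqrt{n}$). Lemma \ref{lan}(ii) will then apply, with the same sequence $\alpha_n$, to all three sums simultaneously, yielding
\begin{equation*}
\sum_{i=0}^n \frac{s^{2i}}{i!} \le e^{s^2}, \qquad \sum_{i=0}^n \frac{t^{2i}}{i!} \le e^{t^2}, \qquad \sum_{i=0}^n \frac{(st)^i}{i!} \ge (1-e^{-\alpha_n^2/4})\, e^{st}.
\end{equation*}

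Substituting into the definition of $A_n(s,t)$, I would obtain
\begin{equation*}
A_n(s,t) \ge (1-e^{-\alpha_n^2/4})\, e^{-(t-s)^2/2},
\end{equation*}
and hence, using $1 - e^{-x} \le x$ for $x \ge 0$,
\begin{equation*}
1 - A_n(s,t) \le \tfrac{(t-s)^2}{2} + e^{-\alpha_n^2/4} \le \tfrac{u^2}{2} + e^{-\alpha_n^2/4}.
\end{equation*}

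It will then remain to check that each term on the right is at most $\frac{1}{2|\log u|^2}$. The bound $u^2/2 \le \frac{1}{2|\log u|^2}$ is trivial for small $u$. For the exponential term, set $L = |\log u|$. Since $\alpha_n^2 = n/(\log n)^2$ is increasing in $n$ for $n \ge e^2$, the hypothesis $n \ge \sqrt{L}$ yields $\alpha_n^2 \ge \sqrt{L}/(\tfrac12 \log L)^2 = 4\sqrt{L}/(\log L)^2$, which dwarfs $8\log L$ as $u \to 0$; hence $e^{-\alpha_n^2/4} \le \frac{1}{2L^2}$ for $u$ sufficiently small, and the claim follows by summing the two contributions.

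The one point worth verifying carefully is the inequality $(\sqrt{n}-\alpha_n)^2 \le n - \sqrt{n}\,\alpha_n$, which is what lets Lemma \ref{lan}(ii) be applied uniformly to all three series; this in turn reduces to $\alpha_n \le \sqrt{n}$, immediate from $\alpha_n = \sqrt{n}/\log n$. Beyond this and the elementary numerical estimates above, the argument presents no real obstacle — the range treated here is considerably easier than the $st<0$ regime of Lemma \ref{lemc31}, where one genuinely needs the finer two-sided estimate from Lemma \ref{lan}(iii).
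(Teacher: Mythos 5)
Your proof is correct and follows essentially the same route as the paper's: apply Lemma \ref{lan}(ii) to all three partial exponential sums (upper bound on the denominator, lower bound on the numerator) to get $A_n(s,t) \geq (1-e^{-\alpha_n^2/4})e^{-(t-s)^2/2}$, then split $1 - A_n(s,t)$ into the $u^2/2$ piece and the $e^{-\alpha_n^2/4}$ piece and check each is small under $n \geq \sqrt{|\log u|}$. Your explicit verification that $(\sqrt{n}-\alpha_n)^2 \leq n - \sqrt{n}\alpha_n$ (so Lemma \ref{lan}(ii) applies to $s^2,t^2,st$ alike) and the monotonicity check on $\alpha_n^2$ are small clarifying details the paper leaves implicit, but the argument is the same.
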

\begin{proof} By Lemma \ref{lane} (ii), we have 
$$A_n(s,t)=\frac{\sum_{i=0}^n \frac{(st)^i}{i!}}{\sqrt{\sum_{i=0}^n \frac{s^{2i}}{i!}}\sqrt{\sum_{i=0}^n \frac{t^{2i}}{i!}}} \geq e^{-(t-s)^2/2}\left(1- e^{-\alpha_n^2/4}\right).$$
Therefore, if $n\geq \sqrt{|\log u|}$ then 
\begin{align*}
\displaystyle 1-\inf_{\substack{ 0\leq t-s\leq u,\\0 \leq s, t \leq  \sqrt{n}-\alpha_n }} A_n(s,t) & \displaystyle  \leq 1- e^{-u^2/2} +e^{-\alpha_n^2/4}e^{-u^2}\\
 &\leq u^2/2+ e^{-(\sqrt{n}/\log n)^2/4} \leq \frac{1}{|\log u|^2},
 \end{align*}
 for $u$ small enough.
\end{proof}


\begin{thebibliography}{99}
\bibitem{Adler} R.J. Adler, J.E. Taylor. \textit{Random fields and Geometry.} Springer, New York, 2007. 

\bibitem{AS} F. Aurzada, T. Simon. \textit{Persistence probabilities and exponents.} L\'{e}vy matters. V, 183--224, Lecture Notes in Math., 2149, L\'{e}vy Matters, Springer, Cham, 2015.

\bibitem{BS}  A. T. Bharucha-Reid, M. Sambandham. \textit{Random polynomials.} Probability and Mathematical Statistics, Academic
Press, Inc., Orlando, Fla., 1986.

\bibitem{Dal} F. Dalmao. \textit{Asymptotic variance and CLT for the number of zeros of Kostlan Shub Smale random polynomials.} C. R. Math. Acad. Sci. Paris 353 (2015), no. 12, 1141--1145. 

\bibitem{DM} A. Dembo, S. Mukherjee. \textit{No zero-crossings for random polynomials and the heat equation.} Ann. Probab. 43 (2015), no. 1, 85--118.


\bibitem{DO} A. Dembo, Amir; O. Zeitouni. \textit{Large deviations techniques and applications.} Second edition. Applications of Mathematics (New York), 38. Springer-Verlag, New York, 1998.

\bibitem{DPSZ} A. Dembo, B. Poonen, Q-M. Shao, O. Zeitouni. \textit{Random polynomials having few or no real zeros.} J. Amer. Math. Soc. 15 (2002), no. 4, 857--892.

\bibitem{DV} Y. Do, V. Vu. \textit{Central limit theorems for the real zeros of Weyl polynomials.} arXiv:1707.09276.

\bibitem{DNV} Y. Do, O. Nguyen, V. Vu. \textit{Roots of random polynomials with arbitrary coefficients.} To appear, Ann. Probab.


\bibitem{EK} A. Edelman, E. Kostlan. \textit{How many zeros of a random polynomial are real?.} Bull. Amer. Math. Soc. (N.S.) 32
(1995), 1--37. Erratum: Bull. Amer. Math. Soc. (N.S.) 33 (1996), 325.

\bibitem{Far} K. Farahmand. \textit{Topics in random polynomials.} Pitman research notes in mathematics series 393. Longman,
Harlow, 1998.

\bibitem{FeFe} N.D. Feldheim, O.N. Feldheim. \textit{Long gaps between sign-changes of Gaussian stationary processes.} Int. Math. Res. Not. IMRN 2015, no. 11, 3021--3034.

\bibitem{IM} I. A. Ibragimov, N. B. Maslova. \textit{The mean number of real zeros of random polynomials. I. Coefficients with zero
mean.} Theor. Probability Appl. 16 (1971), 228--248.

%\bibitem{KZ} Z. Kabluchko, D.Zaporozhets. \textit{Roots of random polynomials whose coefficients have logarithmic tails.} Ann. Probab. 41 (2013), no. 5, 3542--3581.

\bibitem{Kac}  M. Kac. \textit{On the average number of real roots of a random algebraic equation.} Bull.
Amer. Math. Soc. Volume 49, Number 4 (1943), 314--320.
\bibitem{LS} W.V. Li, Q-M. Shao. \textit{A normal comparison inequality and its applications.} Probab. Theory Related Fields 122 (2002), no. 4, 494--508. 

 \bibitem{LO1} J. E. Littlewood,  A. C. Offord. \textit{On the number of real roots of a random algebraic equation. II.} Proc.
Cambridge Philos. Soc. 35, (1939), 133--148.

\bibitem{LO2} J. E. Littlewood, A. C. Offord. \textit{On the distribution of the zeros and a-values of a random integral function. II.}
Ann. Math. 49 (1948), 885--952. Errata, 50 (1949), 990--991.

\bibitem{Mas1} N. B. Maslova. \textit{The variance of the number of real roots of random polynomials.} Teor. Vero- jatnost. i Primenen.
19 (1974), 36--51.

\bibitem{Mas2} N. B. Maslova. \textit{The distribution of the number of real roots of random polynomials.} Theor. Probability Appl.
19 (1974), 461--473.

%\bibitem{MS} F.Nazarov, M. Sodin. \textit{Correlation functions for random complex zeroes: strong clustering and local universality.} Comm. Math. Phys. 310 (2012), no. 1, 75--98.

\bibitem{NNV} H. Nguyen, O. Nguyen, V. Vu. \textit{On the number of real roots of random polynomials.} Commun. Contemp. Math. 18 (2016), no. 4, 1550052, 17 pp.

\bibitem{SM1} G. Schehr, S. Majumdar. \textit{Statistics of the number of zero crossings: From random polynomials to the diffusion equation.} Phys. Rev. Lett. 99, 060603.

\bibitem{SM2} G. Schehr, S. Majumdar. \textit{Real roots of random polynomials and zero crossing properties of diffusion equation.} J. Stat. Phys. 132 (2008), no. 2, 235--273.

\bibitem{TV} T. Tao, V. Vu. \textit{Local universality of zeroes of random polynomials.} Int. Math. Res. Not. IMRN 2015, no. 13, 5053--5139.

%\bibitem{wilkins} J.E. Wilkins. \textit{An asymptotic expansion for the expected number of real zeros of a random polynomial.} Proc. Amer. Math. Soc. 103 (1988), no. 4, 1249--1258.
\end{thebibliography}
\end{document}